\newcommand{\reals}{\mathbb{R}}
\newcommand{\complex}{\mathbb{C}}
\newcommand{\indicator}{\mathbf{1}}
\newcommand{\bernoulli}{Be}
\newcommand{\pr}{\mathbb{P}}
\newcommand{\ex}{\mathbb{E}}
\newcommand{\ii}{\mathbb{I}}
\newcommand{\Lin}{\mathscr{F}}
\newcommand{\ed}{\overset{d}{=}}
\newcommand{\cd}{\overset{d}{\rightarrow}}
\newcommand{\aas}{\stackrel{a.s.}{\longrightarrow}}
\newcommand{\Aa}{[\mathbf{A1'}]}
\newcommand{\Ab}{[\mathbf{A1''}]}
\newcommand{\Ac}{[\mathbf{A2}]}
\newcommand{\Ad}{[\mathbf{A3}]}
\newcommand{\Ae}{[\mathbf{A4}]}
\newcommand{\Ma}{[\mathbf{M1}]}
\newcommand{\Mb}{[\mathbf{M2}]}
\newcommand{\Mc}{[\mathbf{M3}]}
\newcommand{\M}{[\mathbf{I}]}
\newcommand{\law}{\mathcal{L}}
\providecommand{\abs}[1]{\lvert#1\rvert}
\newcounter{lemmacount} \setcounter{lemmacount}{0}
\newcounter{corcount} \setcounter{corcount}{0}
\newcounter{propcount} \setcounter{propcount}{0}
\newcounter{examples} \setcounter{examples}{0}
\newcounter{remarks} \setcounter{remarks}{0}
\newtheorem{theorem}{Theorem}
\newtheorem{lemma}[lemmacount]{Lemma}
\newtheorem{proposition}[propcount]{Proposition}
\newtheorem{corollary}[corcount]{Corollary}
\newenvironment{remark}[1][]{\refstepcounter{remarks}\medskip\noindent\textbf{Remark \theremarks.}}{\medskip}
\begin{document}

\title{On beta distributed limits of iterated linear random functions}

\author{Shaun McKinlay
\footnote{
Department of Mathematics and Statistics, University of Melbourne, Parkville 3010, Australia. E-mail: s.mckinlay@ms.unimelb.edu.au.
}}

\date{}

\maketitle

\begin{abstract}
\noindent
We consider several special cases of iterations of random i.i.d.\ linear functions with beta distributed fixed points that generate nested interval schemes when iterated in a backward direction, and ergodic Markov chains in the forward direction. We prove that the fixed points are beta distributed by using a related random equation with a gamma distributed solution that also generates a corresponding ergodic Markov chain with a gamma distributed stationary distribution. Our approach allows us to find limiting distributions of the random processes we consider, and provide solutions to the two random equations, by solving only one of these equations. The paper extends many partial results available in the existing literature.
\vspace{0.2cm}

\emph{Key words and phrases:} iterated random functions; nested interval schemes; Markov chains; interval splitting; random equations; beta distribution; limit distribution; perpetuity; products of random matrices.\vspace{0.2cm}

\emph{AMS Subject Classifications:}  primary 62E15; secondary 60J05, 60H25, 60D05.
\end{abstract}


\section{Introduction}\label{S_Intro}

In this paper, we study two classes of discrete time stochastic processes that converge to beta distributed limits. The first class are interval-valued processes that are sequences of nested intervals $\ii_0 = [0,1] \supset \ii_1 \supset \ii_2 \supset \cdots$, where the length  of the $n$th interval $\abs{\ii_n}$ converges to zero almost surely (a.s.) as $n \rightarrow \infty$ and the random rule of choosing interval $\ii_{n+1}$ as part of $\ii_n$ is the same for each $n \ge 0$ (up to mirror reflections). The second class consists of time-homogeneous ergodic Markov chains taking values in $[0,1]$.

The paper is centred around the following two key observations. Firstly, the two classes we consider are generated by the same family of linear random functions. A random function from this family generates an ergodic Markov chain (nested interval scheme) under forward (backward) iterations, which implies the following duality. Every process in one class has a corresponding ``dual" process in the other class, with the same limiting distribution (where the backward iterations converge a.s.\ and the forward iterations converge in distribution). This observation enables us to immediately obtain previously unknown limiting distributions of several processes of interest due to their duality with well studied processes appearing in the literature. 

Our second key observation is that one can show the limit is beta distributed by using a related random equation with a gamma distributed solution, allowing for simple analytic proofs. In addition, this random equation generates a corresponding ergodic Markov chain with a gamma distributed stationary distribution.

These two key observations form the systematic approach used in the present paper, whereby we prove assertions concerning the limiting distributions of random processes we consider, and provide solutions to two random equations, by solving only one of them.

Suppose $F_1, F_2, \ldots$ are i.i.d.\ random mappings of $[0,1]$ into $[0,1]$. The \emph{forward iteration} of the mappings is given by the compositions
\begin{equation}\label{FW}
X_n(\cdot) := F_n \circ F_{n-1} \circ \cdots \circ F_1 (\cdot), \quad n \ge 1, \quad X_0(x) \equiv x, 
\end{equation}
while the \emph{backward iteration} of the mappings is given by
\begin{equation}\label{BW}
Y_n(\cdot) := F_1 \circ F_2 \circ \cdots \circ F_n (\cdot), \quad n \ge 1, \quad Y_0(x) \equiv x.
\end{equation}
The theory of forward and backward iterations of i.i.d.\ random functions is well-established (see e.g.\ \cite{ChamLetac2, Diaconis, Letac}). Here we will only summarise the results relevant to this study.

It is clear that $X_n(\cdot)$ has the same distribution as $Y_n(\cdot)$ for each $n$. However, the properties of the forward and backward processes are very different. Under a broad contraction condition, for any fixed $x \in [0,1]$, the forward sequence $\{X_n(x)\}_{n \ge 0}$ is an ergodic Markov chain, while the backward sequence $\{Y_n(x)\}_{n \ge 0}$ converges a.s.\ and is not in general a Markov chain (see e.g.\ \cite{Diaconis}). In addition, the limiting distributions for the two sequences coincide due to the following contraction principle (see e.g.\ Proposition $1$ in \cite{ChamLetac2}). If $Y := \lim_{n \rightarrow \infty} Y_n(x)$ a.s.\ exists and does not depend on $x$ (in which case we will say that $F_1$ is a contraction), then the Markov chain $\{X_n(x)\}_{n \ge 0}$ is ergodic with stationary distribution given by the law of $Y$. Following the terminology introduced in \cite{Bialkowski}, we will refer to the forward and backward processes as being dual to each other.

In this paper we are dealing with linear random mappings
\begin{equation}\label{Fn}
F_n := f_{A_n,B_n}, \quad n \ge 1,
\end{equation}
where $f_{A_n,B_n}$ are random elements of the family
\begin{equation}\label{Lin_family}
\Lin = \{f_{a,b}(x) = ax+b(1-x): (a,b) \in [0,1]^2\},
\end{equation}
$(A_1,B_1), (A_2,B_2), \ldots$ being an i.i.d.\ sequence of $[0,1]^2$-valued random vectors with a given distribution $\mu$.

In that case, the forward process \eqref{FW} is a special case of the random recurrence 
\begin{equation}\label{perp_rde}
V_n = D_nV_{n-1} + C_n, \quad n \ge 1,
\end{equation}
where $(D_n,C_n)$ are i.i.d.\ $\reals^2$-valued random vectors. If $V_n$ converges in distribution, the limiting distribution is the solution to the random equation
\begin{equation}\label{perp_re}
V \ed DV + C, \quad V \text{ independent of } (D,C) \ed (D_1,C_1).
\end{equation}
This is known as the \emph{perpetuity equation}, since its solution may be interpreted as representing the present value of a commitment to pay at times $n=0,1,\ldots$ random amounts $C_{n+1}$, subject to random discounting from period $n$ to period $n-1$ by $D_n$ (i.e.\ the present value of payment $C_{n+1}$ is $C_1$ for $n=0$, and $C_{n+1} \prod_{i=1}^n D_i$ otherwise). Due to this and numerous other applications (see for instance \cite{Rachev, Vervaat}), the properties of recursion \eqref{perp_rde} have been studied extensively.  A key reference is \cite{Vervaat}, where sufficient conditions for the existence and uniqueness of the limiting distribution of $V_n$ as $n \rightarrow \infty$ are given. In particular, if the condition
\begin{equation}\label{vervaat}
\sum_{k=1}^n \log \abs{D_k} \cd - \infty
\end{equation}
holds, then Lemma $1.5$ in \cite{Vervaat} implies that any solution $V$ of \eqref{perp_re} is unique in distribution and $V_n$ converges in distribution to $V$ for all $V_0$. In the special case of the forward process \eqref{FW} generated by the i.i.d.\ mappings \eqref{Fn} it is clear that condition \eqref{vervaat} is satisfied when
\begin{equation}\label{converge_c1}
\pr(\abs{A_1-B_1} = 1) < 1,
\end{equation}
and this is precisely the condition required for $F_1$ to be a contraction.

Another notable paper is \cite{Kesten}, where a multidimensional version of \eqref{perp_rde} is studied (i.e.\ $C_n$ and $D_n$ are $d \times d$ matrices, and $V_n$ is a $d$-dimensional vector). A number of studies have also considered special cases of \eqref{perp_rde}, where the stationary distribution is found by solving random equation \eqref{perp_re} (see \cite{ChamLetac2, Vervaat} and references therein), however there are relatively few special cases where explicit formulae for the stationary distributions are known. The processes we consider below are generated by several special cases of perpetuity equation \eqref{perp_re} that admit closed form solutions that have not appeared in the literature.

We will concentrate on the following special case of \eqref{perp_rde} that is generated by the forward iteration \eqref{FW}. Starting from a fixed $x \in [0,1]$ and writing $X_n = X_n(x)$, the forward iteration is given by
\begin{equation}\label{FW_rde}
X_n = (A_n-B_n) X_{n-1} + B_n, \quad n \ge 1.
\end{equation}
A deterministic version of the forward process \eqref{FW_rde} (cf.\ \eqref{FW}, \eqref{Fn}--\eqref{Lin_family})  was introduced by C.\ Li in the context of human genetics in \cite{Li} (where it was called the game of ``give-and-take"). This process was subsequently studied (for non-degenerate $\mu$) in \cite{DeGroot}, where conditions for convergence (in distribution) of \eqref{FW_rde} were given. In addition, \cite{DeGroot} included several examples of distributions $\mu$ on $[0,1]^2$ generating Markov chains by \eqref{FW_rde} with known stationary distributions. More recently, a multivariate version of this give-and-take model was considered in \cite{McKinlay}.

If $F_1$ is a contraction, the unique stationary distribution $P$ of the forward process \eqref{FW_rde} satisfies the random equation
\begin{equation}\label{FW_re}
X \ed AX + B(1-X), \quad X \sim P \text{ independent of } (A,B) \ed (A_1,B_1),
\end{equation}
where $X \sim P$ denotes that $X$ has distribution $P$. Therefore, the stationary distribution of the forward process and the limiting distribution of the backward process coincide with the law of solution to \eqref{FW_re}.

We can interpret the forward process \eqref{FW} in terms of the movement of a particle in $[0,1]$, where $X_n$ is the location of the particle at time $n$, and the particle moves from $X_{n-1}$ to $X_n$ by \eqref{FW_rde}. In particular, we will consider models where for each $n$ either $A_n = 1$, corresponding to a move to the right, or $B_n = 0$, corresponding to a move to the left. Several special cases of models of this type (where, at each step, a particle at $X_n \in [0,1]$ moves at time $n+1$ either left to a random point in $[0,X_n]$, or right to a random point in $[X_n,1]$) have been shown to have beta distributed stationary distributions (see e.g.\ \cite{DeGroot, Diaconis, Stoyanov2, Stoyanov}). See also \cite{Ramli, McKinlay2}, where an extension of this model to the case when the direction of the next move is a function of the particles current location was considered.

We will interpret the backwards process \eqref{BW} as generating a sequence of nested intervals  $\ii_0 = [0,1] \supset \ii_1 \supset \ii_2 \supset \cdots$ given by the ranges of the corresponding backward mappings $Y_0, Y_1, Y_2, \ldots$ (i.e.\ $Y_n$ maps $[0,1]$ onto $\ii_n$), and consider only the case when $\abs{\ii_n} \rightarrow 0$ as $n \rightarrow \infty$ (i.e.\ when $F_1$ is a contraction). A nested interval scheme of this type was studied in \cite{Johnson, Kennedy}, where the \emph{limiting location} $Y$ was shown to be beta distributed.

In some of the cases we consider, the nested interval scheme generated by the backward iteration \eqref{BW} is an interval splitting scheme of the following type. Choose a random (not necessarily uniformly distributed) \emph{splitting point} $S_1$ in $[0,1]$ and select according to a given (possibly random) rule one of the subintervals $[0,S_1]$, $[S_1,1]$. Continuing this procedure in the same way independently on the chosen subinterval, we obtain a random sequence $\{S_n\}_{n \ge 1}$. When the law of the splitting point $S_1$ is not concentrated on $\{0,1\}$, the length of the chosen interval tends to zero a.s.\ as $n \rightarrow 0$, and therefore $S_n$ converges a.s.\ to a random variable $Y$. In several cases considered previously, this limiting random variable $Y$ turns out to be beta distributed (see e.g.\ \cite{Chen1984, Chen1981, Devroye, Ghorbel, Kerov}). We call this scheme the CGZ model after Chen, Goodman and Zame who first studied it in \cite{Chen1984} under the conditions that $S_n \sim U[0,1]$ and the largest subinterval is selected with probability $p$ at each step.

Our systematic approach allows us to describe the nested interval scheme and ergodic Markov chain that are dual to each other in each case we consider below. In particular, we will describe the dual of the above process when the larger of the subintervals $[0,S_1]$, $[S_1,1]$ is chosen with probability $p$, and the smaller with probability $1-p$, that eluded the authors of \cite{Bialkowski}.

Most of the nested interval schemes considered in the literature can be classified using a convenient notation of the form
\[
\langle L_N, L'_{M'} | L''_{M''} \rangle
\]
where $L , L', L'',M',M''$ are letters and $N$ is a natural number which have the following meaning.

Imagine each step of a nested interval scheme $\ii_0 = [0,1] \supset \ii_1 \supset \ii_2 \supset \cdots$ is generated by the following two stage process. In the first stage of step $n$ we choose a fixed (one and the same for all $n$) number of subintervals of $\ii_{n-1}$, $n \ge 1$. In the second stage, we select one of the chosen subintervals to be $\ii_n$.

We set $L := S$ if the intervals from the first stage can be constructed by \emph{splitting} the interval $\ii_{n-1}$, $n \ge 1$, into two or more subintervals (i.e.\ they form a partition of $\ii_{n-1}$), otherwise we set $L := G$ for \emph{general}. The number $N$ is equal to the number of intervals generated in the first stage. We set $L' := D$ if the rule for the first stage is \emph{deterministic}, or $L := R$ if the rule is \emph{random}. The subscript $M'$ of the second component is equal to $I$ if the rule is the same (i.i.d.\ after scaling and translation) for each stage of the process, otherwise $M' := G$ for \emph{general}. Similarly, in the final component we set $L'':= D$ (resp.\ $R$) if the rule for the second stage is deterministic (resp.\ random), and $M'' := I$ if the rule is the same (i.i.d.)\ for each stage of the process, or $M'' := G$ otherwise.

We will mostly be interested in nested interval schemes with $M' = M'' = I$ since in that case the scheme is completely determined by the rule for choosing the first interval $\ii_1$ from $[0,1]$ (e.g.\ the CLZ model is an $\langle S_2, R_I | R_I \rangle$ scheme in the general case).

To illustrate our classification, consider the following scheme that was introduced by Kennedy in the elegant note \cite{Kennedy}. Suppose at the $n$th stage of the process $\ii_n = [A_n,B_n]$. Take $k$ i.i.d.\ points $U_1^{(n)}, \ldots, U_k^{(n)}$ uniformly distributed on $[A_n,B_n]$, and let $C_n := U_{(1)}^{(n)}$, $D_n := U_{(k)}^{(n)}$, where $U_{(1)}^{(n)} \le \cdots \le U_{(k)}^{(n)}$ are the order statistics of $U_1^{(n)}, \ldots, U_k^{(n)}$. Then the $(n+1)$st interval $\ii_{n+1}$ is taken to be $[C_n,B_n]$, $[A_n,D_n]$ or $[C_n,D_n]$ independently at each stage with probabilities $p, q, r$, respectively, with $p + q + r = 1$. Then, as shown in \cite{Kennedy} one has $A_n, B_n \aas Z \sim \beta(k(p+r),k(q+r))$.

Consider the case $p,q,r>0$. For $k>3$ the above scheme is an $\langle G_3, R_I | R_I \rangle$ nested interval scheme, whereas for $k=3$ it is an $\langle S_3, R_I | R_I  \rangle$ scheme. Note that interval schemes can admit different classifications within the introduced notation. For example, when $p=1$, Kennedy's scheme can be viewed as either $\langle S_3, R_I | D_I \rangle$ or $\langle G_1, R_I | D_I \rangle$, where in the latter we only considered the middle interval $[C_n,D_n]$, rather than choosing the middle interval deterministically from the three intervals in the former. Despite that, the classification may still be useful for the schemes we will consider.

Further, we observe that the forward and backward process are also related to products of i.i.d.\ $2 \times 2$ random stochastic matrices since the processes can be written equivalently by
\begin{equation}\label{FW_matrix}
(X_n(x),1-X_n(x))\\
=
(x,1-x)
\begin{pmatrix}
A_1 & 1-A_1\\
B_1 & 1-B_1\\
\end{pmatrix}
\cdots
\begin{pmatrix}
A_n & 1-A_n\\
B_n & 1-B_n\\
\end{pmatrix},
\end{equation}
and
\begin{equation}\label{BW_matrix}
(Y_n(x),1-Y_n(x))\\
=
(x,1-x)
\begin{pmatrix}
A_n & 1-A_n\\
B_n & 1-B_n\\
\end{pmatrix}
\cdots
\begin{pmatrix}
A_1 & 1-A_1\\
B_1 & 1-B_1\\
\end{pmatrix},
\end{equation}
respectively. If $F_1$ is a contraction, then it is clear that for the left product we have that
\begin{equation}\label{left_convergence}
\begin{pmatrix}
A_n & 1-A_n\\
B_n & 1-B_n\\
\end{pmatrix}
\cdots
\begin{pmatrix}
A_1 & 1-A_1\\
B_1 & 1-B_1\\
\end{pmatrix}
\aas
\begin{pmatrix}
Y & 1-Y\\
Y & 1-Y\\
\end{pmatrix}
\end{equation}
as $n \rightarrow \infty$, where $Y := \lim_{n \rightarrow \infty} Y_n(x)$ a.s., and for the right product we have that
\begin{equation*}
\begin{pmatrix}
A_1 & 1-A_1\\
B_1 & 1-B_1\\
\end{pmatrix}
\cdots
\begin{pmatrix}
A_n & 1-A_n\\
B_n & 1-B_n\\
\end{pmatrix}
\cd
\begin{pmatrix}
X & 1-X\\
X & 1-X\\
\end{pmatrix}
\ed
\begin{pmatrix}
Y & 1-Y\\
Y & 1-Y\\
\end{pmatrix}
\end{equation*}
as $n \rightarrow \infty$, where $X$ satisfies the random equation \eqref{FW_re} that can be equivalently written as
\begin{equation}\label{Matrix_re}
(X,1-X)
\ed
(X,1-X)
\begin{pmatrix}
A & 1-A\\
B & 1-B\\
\end{pmatrix}.
\end{equation}
It is well-known that a necessary and sufficient condition for the convergence \eqref{left_convergence} is that $\mu$ is not concentrated on $\{(1,0),(0,1)\}$ (see \cite{Grenander}), which is equivalent to our contraction condition \eqref{converge_c1}.

Rather surprisingly, in the literature devoted to the study of Markov chains generated by \eqref{FW} and nested interval schemes generated by \eqref{BW}, the authors mostly do not note and/or use the fact that the forward and backward iterations can be represented by the right and left products of i.i.d.\ $2 \times 2$ random stochastic matrices, respectively. One exception is \cite{Neininger}, where rates of convergence for the left products \eqref{left_convergence} are given. Observing that the $n$th stage of the interval splitting schemes that were considered in \cite{Chen1984, Chen1981, Devroye, Kennedy} correspond to the left hand side of \eqref{left_convergence}, the author of \cite{Neininger} gives geometric convergence rates in terms the minimal $L_p$-metric and also (where possible) for the Kolmogorov (uniform) metric. We note that the same approach may be used to derive similar convergence results for the special cases we present below.

Interestingly, simply recognising that the stationary distribution of the forward process \eqref{FW} is the law of the almost sure limit in \eqref{left_convergence} would have progressed the search for special cases of the latter that admit closed form expressions for those distributions. To illustrate this observation denote by $\beta(a,b)$, $a,b>0$, the beta distribution with density
\begin{equation*}
B(a,b)^{-1} u^{a-1} (1-u)^{b-1}, \quad 0<u<1,
\end{equation*}
where $B(a,b)$ is the beta function. In addition, if $A \sim F_A$ is independent of $B \sim F_B$, we write $F_A \otimes F_B$ for the law of $(A,B)$. Then, one can observe that the main assertion (Theorem 2) in \cite{VanAssche} devoted to random matrix products assumed that $\mu = \beta(a,a) \otimes \beta(a,a)$, in which case $Y \sim \beta(2a,2a)$ for the limit on the right hand side of \eqref{left_convergence}. However, more than two decades earlier in paper \cite{DeGroot} on random linear iterations, it was shown that for $\mu = \beta(a,b) \otimes \beta(b,a)$, one has that $X_n(x) \cd X \sim \beta(a+b,a+b)$.

The remainder of the paper is structured as follows. In Section \ref{S_main}, we present our main results, beginning in Section \ref{S_general_interval} with a description of the general nested interval scheme generated by the backward iteration \eqref{BW}. We then devote Section \ref{S_equivalence} to proving an equivalence result between beta distributed solutions to \eqref{FW_re} and gamma distributed solutions to a related random equation. In Section \ref{S_claims} we introduce several equivalent claims concerning limiting distributions for processes and solutions to random equations we consider.

In Sections \ref{model1} and \ref{model2} we use the approach discussed above to find the explicit form for the limiting distributions of nested interval schemes and Markov chains on $[0,1]$ in two broad special cases. In the first one, at each step either $A_n=1$ or $B_n=0$. In the second more general case, the distribution of $(A_n,B_n)$ can have support on an arbitrary subset of the unit square $[0,1]^2$, with most cases considered being closely related to the beta distribution.


\section{Main Results}\label{S_main}

We start by describing the general nested interval scheme generated by the backward iteration \eqref{BW}. 

\subsection{A general nested interval scheme}\label{S_general_interval}

Recall that $F_1, F_2, \ldots$ are the random functions from $\Lin$ generated by a sequence i.i.d.\ vectors $(A_1,B_1), (A_2,B_2), \ldots$ with distribution $\mu$ (see \eqref{Fn}, \eqref{Lin_family}). Plainly,
\begin{equation*}
F_1(x) = (A_1-B_1)x + B_1
\end{equation*}
maps $[0,1]$ onto $\ii_1 = [\min(A_1,B_1), \max(A_1,B_1)]$. So the $n$th backward iteration from \eqref{BW} maps $[0,1]$ onto the $n$th interval of the following nested interval scheme.  Starting with the unit interval $[0,1]$, select the interval $\ii_1$ with end points $A_1$ and $B_1$. For the second step, noting that the mapping $F_2$ is linear, we see that the end points of the second nested interval $\ii_2 \subset \ii_1$ are given by
\begin{equation*}
\overline{A}_2 := Y_2(1) = A_2(A_1-B_1) + B_1 \quad \text{and} \quad \overline{B}_2 := Y_2(0) = B_2(A_1-B_1) + B_1.
\end{equation*}
It follows that
\begin{align}
\law \left(\frac{(\overline{A}_2-B_1, \overline{B}_2-B_1)}{A_1-B_1} \Bigr| A_1 - B_1 > 0 \ \right) &= \law((A_1,B_1)),\label{lawIS_AB_e1}\\
\law \left(\frac{(\overline{A}_2-A_1, \overline{B}_2-A_1)}{B_1-A_1} \Bigr| A_1 -B_1 < 0 \ \right) &= \law((1-A_1,1-B_1))\notag,
\end{align}
where $\law(X)$ denotes the distribution of $X$. This is, if $A_1 > B_1$, the law of the ends points of the interval $\ii_2$ conditioned on $\ii_1$ is the same (after translating and scaling to the unit interval) as that of the end points of $\ii_1$. Otherwise, if $A_1 < B_1$, the law of $(\overline{A}_2, \overline{B}_2)$ conditioned on $\ii_1$ is the same (after translating and scaling) as that of $(1-A_1,1-B_1)$. Similarly, the end points of the $n$th interval are given by $\overline{A}_n := Y_n(1)$ and $\overline{B}_n :=  Y_n(0)$, with
\begin{align}
\law \left(\frac{(\overline{A}_n-\overline{B}_{n-1}, \overline{B}_n-\overline{B}_{n-1})}{\overline{A}_{n-1}-\overline{B}_{n-1}} \Bigr| \prod_{j=1}^{n-1}(A_j - B_j) > 0 \ \right) &= \law((A_1,B_1)),\label{lawIS_AB_e2}\\
\law \left(\frac{(\overline{A}_n-\overline{A}_{n-1}, \overline{B}_n-\overline{A}_{n-1})}{\overline{B}_{n-1}-\overline{A}_{n-1}} \Bigr| \prod_{j=1}^{n-1}(A_j - B_j) < 0 \ \right) &= \law((1-A_1,1-B_1))\notag.
\end{align}

We will mostly be interested in special cases where one of the following conditions is met:
\begin{itemize}
\item[$\Ma$] $A > B$ a.s.,
\item[$\Mb$] $\mu = \mu' := \law((1-A,1-B))$,
\item[$\Mc$] $\mu'  = \law((B,A))$.
\end{itemize}
In that case the nested interval schemes generated by \eqref{BW} satisfy the following i.i.d.\ property.
\begin{itemize}
\item[$\M$] For all $n \ge 1$, the conditional distribution of
\[
\frac{\ii_n-\min \ii_{n-1}}{\abs{\ii_{n-1}}}
\]
given $\ii_{n-1}$ doesn't depend on $\ii_{n-1}$ and is one and the same for all $n \ge 1$.
\end{itemize}

\subsection{A relationship between beta and gamma distributed solutions to linear random equations}\label{S_equivalence}

The following proposition is fundamental to our approach and provides a simple way to show that solutions to \eqref{FW_re} are beta distributed. Denote by $\Gamma_a$ the gamma distribution with scale parameter $1$ and shape parameter $a \in \reals_+ := (0,\infty)$, with density $u^{a-1} e^{-u} \Gamma(a)^{-1}$, $u>0$.

\begin{proposition}\label{p_equiv1}
Random equation \eqref{FW_re} has solution $X \sim \beta(a,b)$ for some $a,b>0$ iff
\begin{equation}\label{e_equiv}
V_1 \ed AV_1+BV_2, \quad V_1,V_2, (A,B) \text{ are mutually independent},
\end{equation}
holds for $(V_1,V_2) \sim \Gamma_a \otimes \Gamma_b$.
\end{proposition}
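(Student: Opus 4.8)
The plan is to prove both implications through the key identity relating beta and gamma variables, namely that if $G_a \sim \Gamma_a$ and $G_b \sim \Gamma_b$ are independent, then $G_a/(G_a+G_b) \sim \beta(a,b)$ and this ratio is independent of the sum $G_a + G_b \sim \Gamma_{a+b}$. This beta--gamma algebra is the bridge between the two equations, since equation \eqref{FW_re} lives on $[0,1]$ while \eqref{e_equiv} lives on $\reals_+$, and the natural map between them is $(V_1,V_2) \mapsto V_1/(V_1+V_2)$.

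For the reverse direction (assuming \eqref{e_equiv} holds for $(V_1,V_2) \sim \Gamma_a \otimes \Gamma_b$), I would set $X := V_1/(V_1+V_2)$, so that $X \sim \beta(a,b)$ by beta--gamma algebra, and $1-X = V_2/(V_1+V_2)$. The idea is to divide the identity \eqref{e_equiv} by $V_1 + V_2$. The obstacle is that \eqref{e_equiv} is only an equality in distribution, not almost surely, so one cannot directly manipulate it. I would instead argue at the level of distributions: construct a fresh coupling in which $V_1 = AV_1 + BV_2$ holds with $(A,B)$, $V_1$, $V_2$ independent and $(V_1,V_2)\sim \Gamma_a\otimes\Gamma_b$ all on one probability space, then show that the distributional identity \eqref{e_equiv} forces the ratio $V_1/(V_1+V_2)$ to satisfy \eqref{FW_re}. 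Concretely, I expect to verify that $X$ and $(A,B)$ are independent and that $AX + B(1-X)$, obtained by dividing the right side of \eqref{e_equiv} by $V_1+V_2$, has the same distribution as $X$; the independence of the beta ratio from its gamma sum is what makes the scaling by $V_1+V_2$ legitimate distributionally.

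For the forward direction (assuming \eqref{FW_re} has a solution $X \sim \beta(a,b)$), the plan is to reverse the construction. Given $X \sim \beta(a,b)$ independent of $(A,B)$ with $X \ed AX + B(1-X)$, I would introduce an independent $\Gamma_{a+b}$ random variable $S$ and define $V_1 := SX$, $V_2 := S(1-X)$. By beta--gamma algebra this gives $(V_1,V_2) \sim \Gamma_a \otimes \Gamma_b$. Multiplying the identity $X \ed AX + B(1-X)$ by $S$ (using that $S$ is independent of everything and that the beta ratio is independent of its gamma total) should yield $V_1 \ed AV_1 + BV_2$ with the required mutual independence, establishing \eqref{e_equiv}.

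The main obstacle I anticipate is justifying the passage between almost-sure algebraic manipulations and equalities in distribution, and in particular keeping track of which independence assertions survive the transformation. The crux is that multiplying (or dividing) by the gamma total $S = V_1 + V_2$ preserves the distributional identity precisely because $S$ is independent of the beta-distributed ratio; this is the one nontrivial structural fact, and I would state it carefully as the hinge of both implications rather than grinding through the density computations, which are routine once the coupling is set up. I would also need to check that the mutual independence of $V_1$, $V_2$, and $(A,B)$ in \eqref{e_equiv} corresponds exactly to the independence of $X$ and $(A,B)$ in \eqref{FW_re} together with the independent auxiliary $S$, so that neither direction smuggles in extra dependence.
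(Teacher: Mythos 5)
Your strategy is the same one the paper uses: pass between the two equations via beta--gamma algebra, writing $X = V_1/(V_1+V_2)$, $S = V_1+V_2$, so that $V_1 = SX$, $AV_1+BV_2 = \bigl(AX+B(1-X)\bigr)S$, with $X \perp S$ and $(A,B,X) \perp S$. Your forward direction (from \eqref{FW_re} to \eqref{e_equiv}) is correct as sketched: defining $V_1 := SX$, $V_2 := S(1-X)$ gives $(V_1,V_2) \sim \Gamma_a \otimes \Gamma_b$ with the required mutual independence, and since $X \ed AX+B(1-X)$ with both sides independent of $S$, multiplying by the independent factor $S$ trivially preserves equality in distribution, yielding $V_1 \ed AV_1+BV_2$.

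The gap is in the reverse direction, at exactly the step you single out as the hinge. From \eqref{e_equiv} you correctly reduce to: $XS \ed \bigl(AX+B(1-X)\bigr)S$, where $X$ and $AX+B(1-X)$ are each independent of $S$, and you then assert that independence is what makes ``dividing by $S$'' legitimate. That is false as a general principle: $U_1 S \ed U_2 S$ with $S$ independent of $U_1$ and of $U_2$ does \emph{not} imply $U_1 \ed U_2$. Taking logarithms, the claim becomes cancellation of an independent additive summand, which can fail when the characteristic function of $\ln S$ has zeros (a standard counterexample takes $\ln S$ with a P\'olya-type triangular characteristic function vanishing outside $[-1,1]$, and two distinct laws for $\ln U_1$, $\ln U_2$ whose characteristic functions agree on $[-1,1]$). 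What rescues the argument is a property of the gamma law, not independence: writing $\psi$, $\chi$ for the characteristic functions of $\ln\bigl(AX+B(1-X)\bigr)$ and $\ln X$, and $\varphi(u) = \Gamma(a+b+iu)/\Gamma(a+b)$ for that of $\ln S$, the hypothesis gives $\psi\varphi = \chi\varphi$, and since the Gamma function has no zeros, $\varphi$ never vanishes, whence $\psi \equiv \chi$ and $X \ed AX+B(1-X)$. This deconvolution step is precisely what the paper's proof supplies (take logs, pass to characteristic functions, cancel the nonvanishing $\varphi$); your proof is incomplete without it, and the density computations you defer as ``routine'' are not where the difficulty lies. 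A secondary flaw: the proposed ``fresh coupling in which $V_1 = AV_1+BV_2$ holds'' almost surely with $V_1$, $V_2$, $(A,B)$ mutually independent does not exist in general, since almost sure equality would force $V_1 = BV_2/(1-A)$, making $V_1$ a function of $(A,B,V_2)$ and contradicting independence; the argument must remain at the level of distributions throughout, as the paper's does.
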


\begin{proof}
The proof closely follows that of Theorem $3$ in \cite{McKinlay}. Suppose that relation \eqref{e_equiv} holds. Then
\begin{equation}\label{ABV_eq}
AV_1 + BV_2 = \frac{AV_1 + BV_2}{V_1+V_2} (V_1+V_2) \ed \frac{AV_1 + BV_2}{V_1+V_2} \widetilde{V}, 
\end{equation}
where $\widetilde{V} \sim \Gamma_{a+b}$ is independent of $(A,B,V_1,V_2)$, the equality in distribution following from Lemma $1$ in \cite{McKinlay}. Similarly, we have that
\begin{equation}\label{V1_eq}
V_1 = \frac{V_1}{V_1+V_2} (V_1+V_2) \ed \frac{V_1}{V_1+V_2} \widetilde{V}, 
\end{equation}
where $\widetilde{V} \sim \Gamma_{a+b}$ is independent of $(V_1,V_2)$. Taking logarithms of the right-hand sides of \eqref{ABV_eq} and \eqref{V1_eq}, we obtain
\begin{equation}\label{log_eq}
\ln \left( \frac{AV_1 + BV_2}{V_1+V_2} \right) + \ln (\widetilde{V}) \ed \ln \left( \frac{V_1}{V_1+V_2} \right) + \ln (\widetilde{V}).
\end{equation}
Denoting by $\psi$, $\varphi$ and $\chi$ the characteristic functions of the first, second (and fourth), and third terms in \eqref{log_eq}, respectively, we have
\[
\psi(u) \varphi(u) =  \chi(u) \varphi(u).
\]
It is not hard to show that $\varphi(u) \neq 0$. Therefore $\psi \equiv \chi$, and so
\[
\frac{V_1}{V_1+V_2} \ed \frac{AV_1 + BV_2}{V_1+V_2}.
\]
We conclude that random equation \eqref{FW_re} has solution $Y \ed V_1/(V_1+V_2) \sim \beta(a,b)$.

Conversely, suppose that random equation \eqref{FW_re} has solution $Y \sim \beta(a,b)$. Following the same steps as above in reverse order, we conclude that \eqref{e_equiv} holds. Proposition \ref{p_equiv1} is proved.
\end{proof}

Proposition \ref{p_equiv1} relates the original sequences of iterations of functions on $[0,1]$ to the following forward and backward iterations defined on $\reals_+$. For the family $\{g_{a,c}(x') = ax'+c: (a,c) \in [0,1] \times \reals_+\}$ and an i.i.d.\ sequence $(A_1,B_1,V_1), (A_2,B_2,V_2), \ldots$ with distribution $\mu \otimes \Gamma_b$, set $G_n := g_{A_n,B_n V_n}$. Denote the forward iterations of $\{G_n\}_{n \ge 1}$ by
\begin{equation}\label{FWG}
X'_n(\cdot) := G_n \circ G_{n-1} \circ \cdots \circ G_1 (\cdot), \quad n \ge 1, \quad X'_0(x) \equiv x',
\end{equation}
and its backward iterations by
\begin{equation}\label{BWG}
Y'_n(\cdot) := G_1 \circ G_2 \circ \cdots \circ G_n (\cdot), \quad n \ge 1, \quad Y'_0(x) \equiv x'.
\end{equation}
Starting from a fixed $x'>0$ and writing $X'_n = X'_n(x')$, the forward process \eqref{FWG} is given by
\begin{equation}\label{FWG_rde}
X'_n := A_nX'_{n-1} + B_nV_n, \quad n \ge 1.
\end{equation}
This is also a special case of recursion \eqref{perp_rde}, and so $X'_n$ converges in distribution to a unique (in distribution) $X'$ when condition \eqref{vervaat} is satisfied. In our case, since $A_1 \in [0,1]$, this conditions is satisfied when 
\begin{equation}\label{converge_c2}
\pr(A_1=1) < 1.
\end{equation}

\subsection{Some equivalent claims}\label{S_claims}

For $(a,b) \in \reals_+^2$, we introduce the following claims to be used in the statements of assertions in the remainder of this paper:

\begin{itemize}

\item[$\Aa$] Suppose $V_2 \sim \Gamma_b$. Then $V_1 \sim \Gamma_a$ is the unique solution to \eqref{e_equiv}.

\item[$\Ab$] One has $X \sim \beta(a,b)$ is the unique solution to \eqref{FW_re}.

\item[$\Ac$] For $Y_n(x)$ given by \eqref{BW}, as $n \rightarrow \infty$, one has $Y_n(x) \aas Y \sim \beta(a,b)$ uniformly in $x \in [0,1]$.

\item[$\Ad$] For $X_n(x)$  being the location of the particle after the $n$th step of the Markov chain \eqref{FW} starting from $x \in [0,1]$, one has $X_n(x) \cd X \sim \beta(a,b)$ as $n \rightarrow \infty$.

\item[$\Ae$] For $X'_n(x')$ being the location of the particle after the $n$th step of the Markov chain \eqref{FWG} starting from $x' \in \reals_+$, one has $X'_n(x') \cd X' \sim \Gamma_a$ as $n \rightarrow \infty$.

\end{itemize}

The next proposition shows that to establish $\Aa$--$\Ae$, it suffices to just prove $\Aa$ (or $\Ab$).

\begin{proposition}\label{assertions_prop}
Suppose that the distribution $\mu$ of $(A_1,B_1)$ is such that the convergence conditions \eqref{converge_c1}, \eqref{converge_c2} hold, and at least one of $\Aa$ or $\Ab$ holds true. Then all assertions $\Aa$--$\Ae$ hold true.
\end{proposition}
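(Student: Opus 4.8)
The plan is to establish the equivalences and convergence claims via a cycle of implications, leveraging the two workhorses already available in the excerpt: Proposition~\ref{p_equiv1} (which ties beta solutions of \eqref{FW_re} to gamma relations \eqref{e_equiv}) and the Vervaat-type uniqueness result following from condition \eqref{vervaat}. First I would observe that $\Aa \Leftrightarrow \Ab$ is essentially immediate from Proposition~\ref{p_equiv1}: that proposition asserts that \eqref{FW_re} has a $\beta(a,b)$ solution \emph{iff} \eqref{e_equiv} holds for $(V_1,V_2) \sim \Gamma_a \otimes \Gamma_b$, and the uniqueness clauses in both $\Aa$ and $\Ab$ are supplied by the convergence hypotheses. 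Under \eqref{converge_c1}, Lemma~1.5 in \cite{Vervaat} guarantees that any solution of \eqref{FW_re} is unique in distribution, so ``$X \sim \beta(a,b)$ solves \eqref{FW_re}'' automatically upgrades to ``$X \sim \beta(a,b)$ is the \emph{unique} solution''; an analogous remark handles the uniqueness in $\Aa$.

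Next I would dispatch the convergence claims $\Ac$, $\Ad$, $\Ae$ by identifying them as the forward/backward dynamics attached to the relevant random equations. For $\Ad$: the forward process \eqref{FW_rde} is the special case of the perpetuity recursion \eqref{perp_rde}, and under \eqref{converge_c1} condition \eqref{vervaat} holds, so $X_n(x) \cd X$ where $X$ solves \eqref{FW_re}; invoking $\Ab$ identifies the limit as $\beta(a,b)$. For $\Ac$: since \eqref{converge_c1} makes $F_1$ a contraction, the backward iterations $Y_n(x)$ converge a.s.\ to a limit $Y$ independent of $x$, and by the contraction principle (Proposition~1 in \cite{ChamLetac2}) the law of $Y$ is the stationary law of the forward chain, hence $\beta(a,b)$ again via $\Ab$; the uniformity in $x$ follows because $Y_n$ maps all of $[0,1]$ onto the nested interval $\ii_n$ whose length tends to zero a.s. For $\Ae$: the process \eqref{FWG_rde} is likewise a perpetuity recursion, and under \eqref{converge_c2} its version of \eqref{vervaat} holds (as noted right after \eqref{converge_c2}), so $X'_n(x') \cd X'$ with $X'$ solving the $\reals_+$-analogue $X' \ed AX' + BV$; the point is then to recognize that this limiting random equation is exactly \eqref{e_equiv} with $V_1 := X'$, $V_2 := V \sim \Gamma_b$, so that $\Aa$ forces $X' \sim \Gamma_a$.

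The cleanest organization is therefore a single implication cycle: $\Aa \Rightarrow \Ab$ (Proposition~\ref{p_equiv1} plus Vervaat uniqueness), $\Ab \Rightarrow \Ad$ and $\Ab \Rightarrow \Ac$ (forward perpetuity convergence and the contraction principle), $\Ab \Rightarrow \Ae$ via $\Aa$ (matching \eqref{e_equiv} to the stationary equation of \eqref{FWG_rde}), and finally $\Ae \Rightarrow \Aa$ or, more directly, closing the loop by noting $\Ab \Rightarrow \Aa$ through the reverse direction of Proposition~\ref{p_equiv1}. Since the hypothesis grants ``at least one of $\Aa$ or $\Ab$,'' and these two are equivalent, I get all five assertions once the chain is complete.

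The step I expect to be the main obstacle is $\Ae$, specifically the careful identification of the stationary distribution of \eqref{FWG_rde} as the solution $V_1$ of \eqref{e_equiv}. The subtlety is that the i.i.d.\ driving sequence for \eqref{FWG_rde} has distribution $\mu \otimes \Gamma_b$, so the ``$C_n$'' term is the product $B_n V_n$ with $V_n \sim \Gamma_b$ independent of $(A_n,B_n)$; one must verify that the limiting random equation $X' \ed AX' + BV$ (with $X'$ independent of $(A,B,V)$ and $V \sim \Gamma_b$) coincides with \eqref{e_equiv} upon relabeling $V_1 = X'$. Checking that the independence structure matches \eqref{e_equiv} exactly, and that the uniqueness asserted in $\Aa$ is precisely the uniqueness delivered by the Vervaat condition \eqref{converge_c2}, is where the argument requires genuine attention rather than routine bookkeeping; the remaining implications are comparatively mechanical applications of the contraction principle and perpetuity convergence.
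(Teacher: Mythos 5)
Your proposal is correct and follows essentially the same route as the paper's own proof: the equivalence $\Aa \Leftrightarrow \Ab$ via Proposition~\ref{p_equiv1} together with the uniqueness supplied by conditions \eqref{converge_c1}--\eqref{converge_c2}, then identification of the stationary laws of the two forward chains with the solutions of \eqref{FW_re} and \eqref{e_equiv}, and the contraction principle of \cite{ChamLetac2} to transfer the forward stationary law to the backward (nested interval) limit. Your write-up merely spells out details the paper leaves implicit (e.g.\ the uniformity in $x$ for $\Ac$ and the matching of \eqref{FWG_rde}'s stationary equation with \eqref{e_equiv}), which is fine.
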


\begin{proof}
Since the convergence conditions \eqref{converge_c1}, \eqref{converge_c2} are satisfied, any solutions to random equations \eqref{FW_re} and \eqref{e_equiv} is unique. Therefore by Proposition \ref{p_equiv1} $\Aa$ holds if and only if $\Ab$ holds. Further, the law of $V_1$ from $\Aa$ is the stationary distribution of the Markov chain from $\Ae$, and the law of $X$ from $\Ab$ is the stationary distribution of the Markov chain from $\Ad$ which coincides with the law of the limiting location in the nested interval scheme from $\Ac$ by the contraction principle in \cite{ChamLetac2}.
\end{proof}

We will now consider several classes of distributions $\mu$ on $[0,1]^2$ satisfying convergence conditions \eqref{converge_c1}, \eqref{converge_c2}, with beta distributed solutions to random equation \eqref{FW_re}. Uniqueness of the solutions to \eqref{FW_re} and \eqref{e_equiv} is implied by the convergence conditions.


\subsection{Model 1}\label{model1}
In this model we assume that $\pr( \{A=1, B<1\} \cup \{A>0, B=0\}) = 1$. Then for some fixed $p \in (0,1)$, one can write
\begin{equation}\label{Model1_AB}
(A,B) \ed I_p(1-L,0) +(1-I_p)(1,R),
\end{equation}
where $I_p \sim \bernoulli(p)$ is independent of the random vector $(L,R) \in [0,1)^2$ and $\bernoulli(p)$ denotes the Bernoulli distribution with success probability $p$.

This model has two important features. Firstly, since $A > B$ a.s., the nested interval scheme generated by \eqref{BW} satisfies the independence property $\M$. Secondly, the representation \eqref{Model1_AB} allows for a simple description of the nested interval scheme generated by \eqref{BW}, the Markov chain on $[0,1]$ given by \eqref{FW_rde}, and the Markov chain on $\reals_+$ given by \eqref{FWG_rde}. These processes are described respectively in (1A), (1B), and (1C) below. In particular, the nested interval scheme ($\langle G_2, R_I | R_I \rangle$ in the general case) described by (1A) and the Markov chain described by (1B) are dual in the sense of the contraction principle (Proposition 1) in \cite{ChamLetac2}.

\begin{itemize}
\item[(1A)] Starting with the unit interval, we choose the two subintervals $[0,1-L]$ and $[R,1]$, i.e.\ by moving the right (resp.\ left) end point of the interval to the left (resp.\ right) a distance given by the proportion $L$ (resp.\ $R$) of the interval. Next, select the interval $[0,1-L]$ with probability $p$, or $[R,1]$ with probability $1-p$, continuing independently in the same way on the chosen subinterval. If $1-L \ed R$, then this is the CGZ model ($\langle S_2, R_I | R_I \rangle$ in the general case).
 
\item[(1B)] A particle starts from a point $x \in [0,1]$ and moves left the distance $Lx$ with probability $p$, or right the distance $R(1-x)$ with probability $1-p$. The process continues in this way, each new step being independent of the past.

\item[(1C)] A particle starts from a point $x' \in \reals_+$ and moves left the distance $Lx'$ with probability $p$, or right the distance $RV_1$ with probability $1-p$, where $V_1 \sim \Gamma_a$ for some $a>0$. The process continues in this way, each new step being independent of the past.
\end{itemize}
Clearly assertions $\Ac$--$\Ae$ give the limiting distributions of the processes described in (1A)--(1C), respectively.


The first special case of Model $1$ considered below demonstrates two points. First, it is not immediate that the backward and forward process are related when studying either independently. Indeed, a search through the literature found only one previous study (see \cite{Diaconis}) that examines both the nested interval (backward) process and the ergodic Markov chain (forward) process generated by distributions $\mu$ on $[0,1]^2$ considered in the present paper. However, \cite{Diaconis} only discussed the general properties of the backward process (i.e.\ a.s.\ convergence of $Y_n(x)$ for all $x \in [0,1]$). Second, it is often possible to gain additional insights into the limiting distribution by considering both the forward and backward process together, rather than only one in isolation.

Consider the following $\langle S_2, D_I | R_I  \rangle$ version of the CGZ model. Fix a $p \in (0,1)$ and, starting with the unit interval, choose the interval $[0,p]$ with probability $p$, or $[p,1]$ with probability $1-p$. The next nested interval is obtained in the same way from the first chosen interval, and so on. It is clear that this scheme can be generated by a single uniform random variable $U \sim U[0,1]$ as follows. On the first step, of the two intervals $[0,p]$ and $(p,1]$, we choose the one containing the value of $U$. At the next step, we partition the chosen interval into two with the same ratio of lengths, and again choose the subinterval containing the value of $U$, and so on. It is obvious from this representation that the limiting location of the nested intervals has distribution $U[0,1]$. 

Thus $\Ab$ holds for this scheme. It follows from Proposition \ref{assertions_prop} that $\Aa$ will also hold. That fact however admits an elementary direct proof as well.

\begin{theorem}\label{t_rosenblatt_ext}
Under assumption \eqref{Model1_AB}, let $L=(1-p)$, $R=p$. Then $\Aa$ holds for $a=b=1$.
\end{theorem}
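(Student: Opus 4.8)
The plan is to verify the distributional identity \eqref{e_equiv} directly by computing Laplace transforms, exploiting the fact that $\Gamma_1$ is the unit-rate exponential distribution. First I would substitute the prescribed values $L = 1-p$, $R = p$ into \eqref{Model1_AB}. Since $L$ and $R$ are now deterministic, the random vector $(A,B)$ is two-valued: $(A,B) = (p,0)$ with probability $p$, and $(A,B) = (1,p)$ with probability $1-p$. Both convergence conditions hold trivially here, as $\abs{A-B} \in \{p,1-p\}$ and $\pr(A=1) = 1-p < 1$ for $p \in (0,1)$; hence, by the discussion preceding the theorem, any solution to \eqref{e_equiv} is automatically unique. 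It therefore remains only to exhibit $V_1 \sim \Gamma_1$ as a solution.

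Taking $V_1, V_2 \sim \Gamma_1$ mutually independent and independent of $(A,B)$, conditioning on the value of $(A,B)$ gives
\[
AV_1 + BV_2 \ed
\begin{cases}
pV_1 & \text{with probability } p, \\
V_1 + pV_2 & \text{with probability } 1-p.
\end{cases}
\]
Next I would compute the Laplace transform $s \mapsto \ex\bigl[e^{-s(AV_1+BV_2)}\bigr]$ for $s \ge 0$. Since $\Gamma_1$ has transform $(1+s)^{-1}$, the scaled variable $pV_1$ has transform $(1+ps)^{-1}$, and the independent sum $V_1 + pV_2$ has transform $(1+s)^{-1}(1+ps)^{-1}$. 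The mixture above therefore has transform
\[
p \cdot \frac{1}{1+ps} + (1-p) \cdot \frac{1}{(1+s)(1+ps)}
= \frac{p(1+s) + (1-p)}{(1+s)(1+ps)}
= \frac{1+ps}{(1+s)(1+ps)}
= \frac{1}{1+s}.
\]
Since $(1+s)^{-1}$ is exactly the Laplace transform of $\Gamma_1$, and the Laplace transform determines a distribution on $\reals_+$, this yields $AV_1 + BV_2 \ed V_1 \sim \Gamma_1$, so \eqref{e_equiv} holds with $a=b=1$, establishing $\Aa$.

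The only genuine obstacle is a matter of tool selection rather than difficulty: the key algebraic step is the cancellation $p(1+s)+(1-p) = 1+ps$, which removes the factor $(1+ps)$ in the denominator and collapses the mixture to the target transform. Recognizing that the Laplace transform turns this into a one-line identity is what makes the proof elementary; attempting the same verification by convolving densities would be considerably more cumbersome. Once the transform approach is chosen, the computation is entirely routine, and uniqueness requires no additional argument beyond the convergence conditions already noted.
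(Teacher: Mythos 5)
Your proof is correct and follows essentially the same route as the paper: the paper verifies \eqref{e_equiv} by computing the characteristic function of the mixture $AV_1+BV_2$, obtaining $p(1-itp)^{-1} + (1-p)(1-it)^{-1}(1-itp)^{-1} = (1-it)^{-1}$, which is the same one-line cancellation you perform with the Laplace transform in place of the characteristic function. Your explicit remark that uniqueness follows from the convergence conditions is consistent with the paper, which handles that point in the surrounding discussion rather than in the proof itself.
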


\begin{proof}
Since the $V_i$'s are independent exponential when $a=b=1$, one has
\begin{align*}
\ex e^{it(AV_1 + BV_2)} &= p \ex e^{itpV_1} + (1-p) \ex e^{it (V_1+pV_2)}\\
&= p (1-itp)^{-1} + (1-p)(1-it)^{-1} (1-itp)^{-1} = (1-it)^{-1},
\end{align*}
which is the characteristic function of $V_1$ as required.
\end{proof}

Note that assertion $\Ad$ under the conditions of Theorem \ref{t_rosenblatt_ext} extends Theorem 2 in \cite{Stoyanov} where it was proved in the special case $p=1/2$. Likewise, assertion $\Ac$ in this case when interpreted for the products of $2 \times 2$ stochastic matrices (cf.\ \eqref{left_convergence}) extends the result of example on p.160 of \cite{Rosenblatt} that also considered the special case when $p=1/2$.


The next case extends the situation considered in \cite{Stoyanov2} where assertion $\Ad$ from Theorem \ref{T5} below was first proved, extending that assertion in the case $z=1$ from \cite{Stoyanov}.

\begin{theorem}\label{T5}
Under assumption \eqref{Model1_AB}, let $L \ed R \sim \beta(1,z)$ for a fixed $z>0$. Then $\Aa$--$\Ae$ holds for $(a,b)=((1-p)z,pz)$.
\end{theorem}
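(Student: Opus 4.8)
The plan is to establish assertion $\Ab$ and then invoke Proposition \ref{assertions_prop} to obtain $\Aa$--$\Ae$ at once. First I would record that the convergence conditions hold: under \eqref{Model1_AB} one has $A-B\in\{1-L,\,1-R\}$, which equals $1$ only on the event $\{L=0\}\cup\{R=0\}$ of probability zero (since $L\ed R\sim\beta(1,z)$ is atomless), so \eqref{converge_c1} holds; and $\pr(A=1)=\pr(I_p=0)=1-p<1$, so \eqref{converge_c2} holds. Thus it suffices to produce a $\beta(a,b)$ solution of \eqref{FW_re} with $(a,b)=((1-p)z,pz)$.

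Next I would unfold the right-hand side of \eqref{FW_re}. Let $X\sim\beta(a,b)$ be independent of $(A,B)$ given by \eqref{Model1_AB}. On $\{I_p=1\}$ (probability $p$) we get $AX+B(1-X)=(1-L)X$, and on $\{I_p=0\}$ (probability $1-p$) we get $AX+B(1-X)=X+R(1-X)=1-(1-R)(1-X)$. The crucial structural point is that each branch is a product of two independent beta variables, for which there is the clean rule
\[
U\sim\beta(\alpha,\beta),\ W\sim\beta(\alpha+\beta,\gamma)\ \text{independent}\ \Longrightarrow\ UW\sim\beta(\alpha,\beta+\gamma),
\]
which I would verify through the moment identity $\ex U^k=(\alpha)_k/(\alpha+\beta)_k$ (with $(x)_k:=x(x+1)\cdots(x+k-1)$), since then $\ex(UW)^k=(\alpha)_k/(\alpha+\beta+\gamma)_k$ and moments determine a law on $[0,1]$.

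Because $a+b=z$ and $1-L\ed 1-R\sim\beta(z,1)=\beta(a+b,1)$, applying the rule with $\gamma=1$ yields $(1-L)X\sim\beta(a,b+1)$; and since $1-X\sim\beta(b,a)$, the same rule gives $(1-R)(1-X)\sim\beta(b,a+1)$, whence $1-(1-R)(1-X)\sim\beta(a+1,b)$. Therefore $AX+B(1-X)$ has the mixture law $p\,\beta(a,b+1)+(1-p)\,\beta(a+1,b)$, and the remaining task is to check this mixture is $\beta(a,b)$. Comparing densities and cancelling $x^{a-1}(1-x)^{b-1}$ reduces the claim, via $B(a,b)/B(a,b+1)=(a+b)/b$ and $B(a,b)/B(a+1,b)=(a+b)/a$, to the affine identity
\[
p\,\frac{a+b}{b}\,(1-x)+(1-p)\,\frac{a+b}{a}\,x=1\qquad\text{for all }x\in(0,1),
\]
which holds precisely when $b=p(a+b)$ and $pa=(1-p)b$, i.e.\ exactly for $(a,b)=((1-p)z,pz)$. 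This establishes $\Ab$ and hence, by Proposition \ref{assertions_prop}, all of $\Aa$--$\Ae$.

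I expect the only real obstacle to be spotting the product-of-betas structure of the two branches; once the multiplicative rule above is in place the argument is a routine density comparison. One could instead try to verify $\Aa$ directly in the manner of Theorem \ref{t_rosenblatt_ext}, but that route leads to evaluating hypergeometric-type integrals such as $\int_0^1(1-itm)^{-a}m^{z-1}\,dm$, which the beta computation neatly sidesteps.
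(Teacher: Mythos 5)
Your proof is correct, but it follows a genuinely different route from the paper's. The paper likewise reduces everything to \Ab\ via Proposition \ref{assertions_prop}, but then rewrites \eqref{FW_re} in the stick-breaking perpetuity form \eqref{perp_eq}, observes that $(C,D)\sim\beta(1,z)\otimes\bernoulli(1-p)$ as in \eqref{CD_dist}, and settles existence of the $\beta((1-p)z,pz)$ solution by citation: Section 3 of \cite{Sethuraman} (see also Proposition 1 of \cite{Stoyanov2}), with uniqueness from Lemma 1.5 of \cite{Vervaat}. You instead verify the fixed point from scratch: splitting on $I_p$, writing the two branches as $(1-L)X$ and $1-(1-R)(1-X)$, invoking the product rule that $U\sim\beta(\alpha,\beta)$ and $W\sim\beta(\alpha+\beta,\gamma)$ independent imply $UW\sim\beta(\alpha,\beta+\gamma)$ (justified by moments, which determine laws on $[0,1]$), and checking by a density comparison that $p\,\beta(a,b+1)+(1-p)\,\beta(a+1,b)=\beta(a,b)$ precisely when $(a,b)=((1-p)z,pz)$. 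That mixture identity is exactly the P\'olya-urn relation \eqref{polya_re} which the paper records in Remark \ref{R_Urn} as an alternative argument, so in effect you have made self-contained what the paper outsources to the literature. The paper's route buys brevity and a conceptual link to Dirichlet priors; yours buys an elementary, reference-free verification that also sidesteps the hypergeometric computation \eqref{T5_eqn}--\eqref{gauss_tran1} that the paper performs later for other cases. One cosmetic remark: your two closing conditions $b=p(a+b)$ and $pa=(1-p)b$ are the same condition restated; matching the constant and linear coefficients gives $b=p(a+b)$ and $a=(1-p)(a+b)$, each implying the other, and together with $a+b=z$ (already forced by your application of the product rule) they pin down $(a,b)=((1-p)z,pz)$ as claimed.
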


\begin{proof}
According to Proposition \eqref{assertions_prop} it suffices to show that $\Ab$ holds. Clearly random equation \eqref{FW_re} can be written as
\begin{equation}\label{perp_eq}
X \ed (1-C)X+CD,
\end{equation}
where $X$ is independent of 
\begin{equation}\label{CD_def}
(C,D) = (1-A+B, B/(1-A+B)) = \biggr\{ \begin{array}{ll} 
(L,0),  & I_p=1,\\ 
(R,1),  & I_p=0,
\end{array}
\end{equation}
and $(C,D)$ has distribution
\begin{equation}\label{CD_dist}
(C,D) \sim \beta(1,z) \otimes \bernoulli(1-p).
\end{equation}
Therefore, it suffices to show that $X \sim \beta((1-p)z,pz)$ is the unique solution to \eqref{perp_eq} with $(C,D)$ distributed as per \eqref{CD_dist}. Uniqueness follows from Lemma $1.5$ in \cite{Vervaat} since condition \eqref{vervaat} is satisfied for an i.i.d.\ sequence with same law as $(1-C)$. The fact that $X \sim \beta((1-p)z,pz)$ is the solution follows from Section 3 in \cite{Sethuraman} (see also Proposition 1 in \cite{Stoyanov2} where it was shown that \eqref{perp_eq} holds for $X \sim \beta((1-p)z,pz)$ using the method of moments).
\end{proof}

\begin{remark}[]
Observe that for $z=1$, assertion $\Ac$ under the conditions of Theorem \ref{T5} gives the limit distribution of the following $\langle S_2, R_I | R_I \rangle$ version of the CGZ model that was considered in \cite{Kerov}. Choose a uniformly distributed point $U_1$ in $[0,1]$, and select (independently of $U_1$) the subinterval $[0,U_1]$ with probability $p$, or $[U_1,1]$ with probability $1-p$. Continuing independently in the same way on the chosen subinterval, the limiting location of this interval splitting scheme has distribution $\beta(1-p,p)$. This result is an extension of the main result in \cite{Chen1981} where it is proved for $p=1/2$. The authors of \cite{Chen1981} describe the fact that the limit distribution of that scheme is the arcsine law as ``very startling and amazing". The next remark shows that the result obtained in \cite{Chen1981} follows from arcsine laws of the Brownian motion process and the uniform laws of the Brownian bridge process.
\end{remark}

\begin{remark}\label{R_ASL} For a standard Brownian motion $W = \{W_t: t \ge 0\}$ started at zero, L\'evy's ``First Arcsine Law" (see e.g.\ p.136 in \cite{Peres2010}) asserts that the time
\begin{equation*}
\tau_1 := \inf \{t:W_t = \max_{0 \le s \le 1} W_s\}
\end{equation*}
of the first maximum on $[0,1]$, and the time
\begin{equation*}
\tau_2 := \sup \{t \in [0,1]: W_t = 0\}
\end{equation*}
of the last zero of $W$ on $[0,1]$, are both arcsine distributed. L\'evy's ``Second Arcsine Law" (see e.g.\ p.137 in \cite{Peres2010}) says that the same arcsine law holds for the occupation time of the positive half axis of $W$ on $[0,1]$. That is, for
\begin{equation*}
\nu(t) := \int_0^t \indicator_{\{W_s > 0\}}ds,
\end{equation*}
where $\indicator_A$ denotes the indicator function for event $A$, one has $\nu(1) \sim \beta(1/2,1/2)$.

We will now briefly summarise the approach used by P. L\'evy in \cite{Levy1939} to prove the Second Arcsine Law (for a more detailed outline of this approach, see e.g.\ \cite{Pitman1992}). First, L\'evy showed that, given $\tau_2=s$, the process up to time $s$ (i.e.\ $\{W_u:0 \le u \le s\}$) is a Brownian bridge of length $s$. Observing that the time spent on the positive half axis by a Brownian bridge of length $s$ is uniformly distributed on $[0,s]$ (in fact, this property holds for a much broader class of bridge processes having cyclically exchangeable increments, cf.\ \cite{BorovMcK}), he derived the arcsine law for $\nu(1)$ using the uniformity of $\nu(s)/s$ and the fact that after $s$ the Brownian path is equally likely to be positive of negative.

That is, L\'evy showed that $U \tau_2 + I_{1/2} (1-\tau_2)$ is arcsine distributed, where $(U,I_{1/2},\tau_2) \sim U[0,1] \otimes \bernoulli(1/2) \otimes \beta(1/2,1/2)$, and this is precisely assertion $\Ab$ under the conditions of Theorem \ref{T5} in the special case $(z,p)=(1,1/2)$.

Note that the same approach can be used to show that $(\nu(1)| W_1 < 0) \sim \beta(1/2,3/2)$ (and similarly that $(\nu(1)| W_1 > 0) \sim \beta(3/2,1/2)$). Arguing as above, it suffices to show that
\begin{equation*}
\quad U \tau_2 \sim \beta(1/2,3/2)
\end{equation*}
for $(U,\tau_2) \sim U[0,1] \otimes \beta(1/2,1/2)$, which follows from the well-known fact that $AB \ed C$ for $(A, B) \sim \beta(a+b,c) \otimes \beta(a,b)$, and $C \sim  \beta(a,b+c)$.
\end{remark}

\begin{remark}\label{R_Urn}
It was suggested in \cite{Stoyanov3} that ``maybe the only way" to prove the special case of assertion $\Ad$ under the conditions of Theorem \ref{T5} when $z=1$ (cf.\ Theorem 3 in \cite{Stoyanov}) is via the moment convergence theorem. Here we will give another proof Theorem \ref{T5} using a result from the well-known P\'{o}lya urn model.

In the that scheme, a single urn contains $w$ white balls and $b$ black balls, and at each stage a ball is drawn at random, and then returned to the urn with one additional ball of the same colour. If there are $b_n$ black balls and $w_n$ white balls in the urn after $n$ draws, then for the proportion $P_n := b_n/(b_n+w_n)$ of black balls in the urn, one has $P_{n+k} \aas P_n^* \sim \beta(b_n,w_n)$ (see e.g.\ p.243 in \cite{FellerV2}).

Suppose the urn contains $w$ white balls and $b$ black balls. Since after the first draw a white (black) ball is chosen with probability $w/(w+b)$ ($b/(w+b)$) it follows from the above observation about almost sure convergence that $P_{n+1}^* \sim \beta(b,w+1)$ ($P_{n+1}^* \sim \beta(b+1,w))$. We therefore have that
\begin{equation}\label{polya_re}
X \ed I_{b/(b+w)} X^b+(1-I_{b/(b+w)})X^w,
\end{equation}
where $X \sim \beta(b,w)$ and $I_{b/(b+w)} \sim \bernoulli(b/(b+w))$ is independent of $X^b \sim \beta(b+1,w)$ and $X^w \sim \beta(b,w+1)$. Now, suppose that $(V_1,V_2,V_3) \sim \Gamma_1 \otimes \Gamma_b \otimes \Gamma_w$, then \eqref{polya_re} can be written equivalently as
\begin{align}
\frac{V_2}{V_2+V_3} &\ed I_{b/(b+w)} \frac{V_1+V_2}{V_1+V_2+V_3} + (1-I_{b/(b+w)}) \frac{V_2}{V_1+V_2+V_3}\notag\\
&= \frac{V_2}{V_2+V_3} \frac{V_2+V_3}{V_1+V_2+V_3} +  I_{b/(b+w)} \frac{V_1}{V_1+V_2+V_3}\notag\\
&\ed \frac{V_2}{V_2+V_3} \frac{\widetilde{V}_2+\widetilde{V}_3}{\widetilde{V}_1+\widetilde{V}_2+\widetilde{V}_3} +  I_{b/(b+w)} \frac{V'_1}{V'_1+V'_2+V'_3},\notag
\end{align}
where the last equality follows from Lemma $1$ in \cite{McKinlay}, and $(\widetilde{V}_1,\widetilde{V}_2,\widetilde{V}_3)$ is an independent copy of $(V_1,V_2,V_3)$. The right hand side of the last line above has the same form as the right hand side of \eqref{perp_eq} for $(C,D) \sim \beta(1,b+w) \otimes \bernoulli(b/(b+w))$, with solution $X \sim \beta(b,w)$.
\end{remark}

One can also show that Theorem 2 holds by proving assertion $\Aa$ analytically using the method of characteristic functions. In doing so now, we will establish the limiting distribution in two cases that have not been considered previously in the literature from the well-known properties of the Gauss hypergeometric function
\begin{equation*}
\,_2F_1(a,b;c;\eta) = B(b,c-b)^{-1} \int_0^1 u^{b-1}(1-u)^{c-b-1} (1-u \eta)^{-a} \,du, 
\end{equation*}
for $\eta \in \complex$, Re$(c) > $ Re$(b) > 0$, and $\abs{\arg(1-\eta)}< \pi$ (see e.g.\ 15.3.1 in \cite{AbramStegun}). Now for $(V_1,V_2) \sim \Gamma_{(1-p)v} \otimes \Gamma_{pv}$, $L \ed R \sim \beta(1,v)$ and $(A,B)$ given by \eqref{Model1_AB}, and letting $z = (1-p)v$, $y = pv$, we have
\begin{align}
\ex &e^{it(AV_1 + BV_2)}\notag\\
&= p \ex e^{it(1-L)V_1} + (1-p) \ex e^{it(V_1 + RV_2)}\notag\\
&= \frac{y}{y+z} \int_0^1 \ex(e^{it(1-L)V_1}| (1-L) = u) \pr((1-L) \in du)\notag\\
& \quad + \frac{z}{y+z} \ex e^{itV_1} \int_0^1 \ex(e^{itRV_2} | R = u) \pr(R \in du)\notag\\
&= \frac{y}{y+z} \int_0^1(1-itu)^{-z}u^{y+z-1}B(y+z,1)^{-1} \, du \notag\\
& \quad + \frac{z}{y+z} (1-it)^{-z} \int_0^1(1-itu)^{-y}(1-u)^{y+z-1} B(1,y+z)^{-1} \, du\notag\\
&= \frac{y}{y+z} \,_2F_1(z,y+z;y+z+1;it) + \frac{z}{y+z}(1-it)^{-z}  \,_2F_1(y,1;y+z+1;it)\label{T5_eqn}\\
&= \frac{y}{y+z} \,_2F_1(z,y+z;y+z+1;it) + \frac{z}{y+z} \,_2F_1(z+1,y+z;y+z+1;it)\label{lin_tran1}\\
&= \,_2F_1(z,y+z+1;y+z+1;it)\label{abram_tran1}\\
&= (1-it)^{-z},\label{gauss_tran1}
\end{align}
where \eqref{lin_tran1}, \eqref{abram_tran1}, and \eqref{gauss_tran1} follow from 15.3.3, 15.2.14, and 15.1.8 in \cite{AbramStegun}, respectively. The right hand side of \eqref{gauss_tran1} is the characteristic function of $V_1$, and so $\Aa$ holds as claimed.


We will now use the expression \eqref{T5_eqn} and the well-known property that
\begin{equation}\label{2F1_rel1}
\,_2F_1(a,b;c;\eta) = \,_2F_1(b,a;c;\eta)
\end{equation}
to establish the limiting distribution of our next case. Indeed, using this property it is clear that expression \eqref{T5_eqn} is equal to
\begin{equation}\label{c4_chf}
\frac{y}{y+z} \,_2F_1(z,y+z;y+z+1;it) + \frac{z}{y+z}(1-it)^{-z}  \,_2F_1(1,y;y+z+1;it),
\end{equation}
and this implies that assertion $\Aa$ holds under assumption \eqref{Model1_AB} with $L \sim \beta(1,y+z)$, $R \sim \beta(y,z+1)$, $p=y/(y+z)$, and $(V_1,V_2) \sim \Gamma_{y} \otimes \Gamma_1$. We have proved the following theorem.

\begin{theorem}\label{T4}
Under assumption \eqref{Model1_AB}, let $L \sim \beta(1,y+z)$, $R \sim \beta(y,z+1)$, and $p=y/(y+z)$, $y,z>0$. Then $\Aa$--$\Ae$ hold for $(a,b)=(z,1)$.
\end{theorem}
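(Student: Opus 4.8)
The plan is to lean on Proposition~\ref{assertions_prop}: once the convergence conditions \eqref{converge_c1} and \eqref{converge_c2} are checked, it suffices to establish a single one of the assertions, and I would prove $\Aa$ for $(a,b)=(z,1)$ directly by characteristic functions. First I would verify the two convergence conditions. Under \eqref{Model1_AB} with the stated parameters, $\abs{A-B}$ equals $1-L$ on $\{I_p=1\}$ and $1-R$ on $\{I_p=0\}$, each of which attains the value $1$ only on the null events $\{L=0\}$, $\{R=0\}$, since $L\sim\beta(1,y+z)$ and $R\sim\beta(y,z+1)$ are absolutely continuous; hence $\pr(\abs{A-B}=1)=0<1$. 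Likewise $A=1$ only on $\{I_p=0\}$ up to a null set, so $\pr(A=1)=1-p=z/(y+z)<1$. Both conditions therefore hold, and solutions to \eqref{FW_re} and \eqref{e_equiv} are unique.

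To prove $\Aa$, I would take $(V_1,V_2)\sim\Gamma_z\otimes\Gamma_1$ independent of $(A,B)$ and compute $\ex e^{it(AV_1+BV_2)}$ by conditioning on $I_p$, which splits the expectation as $p\,\ex e^{it(1-L)V_1}+(1-p)\,\ex e^{it(V_1+RV_2)}$. Each summand is then evaluated by conditioning on $L$ (resp.\ $R$) and integrating the resulting $(1-itu)^{-z}$ (resp.\ $(1-itu)^{-1}$) against the appropriate beta density. The key step is to recognise each integral as an instance of Euler's integral representation of $\,_2F_1$ quoted earlier: with $L\sim\beta(1,y+z)$ the first term reduces to $\tfrac{y}{y+z}\,_2F_1(z,y+z;y+z+1;it)$, and with $R\sim\beta(y,z+1)$ the second reduces to $\tfrac{z}{y+z}(1-it)^{-z}\,_2F_1(1,y;y+z+1;it)$, so that the characteristic function of $AV_1+BV_2$ is exactly expression \eqref{c4_chf}.

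From here the machinery assembled before the theorem finishes the job. Applying the symmetry relation \eqref{2F1_rel1} to the factor $\,_2F_1(1,y;\cdot\,;\cdot)$ turns \eqref{c4_chf} into \eqref{T5_eqn}, and the chain of classical hypergeometric identities \eqref{lin_tran1}--\eqref{gauss_tran1} collapses \eqref{T5_eqn} to $(1-it)^{-z}$, the characteristic function of $\Gamma_z$. Thus $AV_1+BV_2\ed V_1\sim\Gamma_z$, which is precisely $\Aa$ for $(a,b)=(z,1)$, and Proposition~\ref{assertions_prop} then upgrades this to all of $\Aa$--$\Ae$.

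I expect the only genuine obstacle to be the bookkeeping in the middle step: one must match the hypergeometric parameters $(a,b;c)$ against the beta and gamma parameters so that the two terms reproduce \eqref{c4_chf} and not some other combination. The conceptual crux is that, although the laws of $L$, $R$ and of $V_2$ here differ from those in Theorem~\ref{T5}, the new choice of $(L,R,p)$ is engineered so that the characteristic function is exactly the $\,_2F_1(a,b;c;\eta)=\,_2F_1(b,a;c;\eta)$ image of the one computed for Theorem~\ref{T5}; once that coincidence is identified, no analytic work beyond \eqref{2F1_rel1}--\eqref{gauss_tran1} remains.
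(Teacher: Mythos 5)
Your proposal is correct and follows essentially the same route as the paper: the paper likewise obtains Theorem~\ref{T4} by recognising that the characteristic function of $AV_1+BV_2$ under these parameters is exactly expression~\eqref{c4_chf}, which the symmetry relation~\eqref{2F1_rel1} identifies with \eqref{T5_eqn}, already collapsed to $(1-it)^{-z}$ via \eqref{lin_tran1}--\eqref{gauss_tran1}. The only differences are presentational --- the paper derives \eqref{c4_chf} from \eqref{T5_eqn} and then reads off the parameters, whereas you compute the characteristic function from the stated parameters and reduce it --- and your explicit check of \eqref{converge_c1}, \eqref{converge_c2} with $(V_1,V_2)\sim\Gamma_z\otimes\Gamma_1$ is consistent with the theorem's $(a,b)=(z,1)$.
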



We can derive yet another case from \eqref{c4_chf} as follows. Recalling that \eqref{gauss_tran1} is equal to \eqref{c4_chf}, we have
\begin{align*}
(1-it)^{-z} &=\frac{y}{y+z} \,_2F_1(z,y+z;y+z+1;it) + \frac{z}{y+z}(1-it)^{-z}  \,_2F_1(1,y;y+z+1;it)\\
&= \frac{y}{y+z} (1-it)^{1-z}\,_2F_1(1,y+1;y+z+1;it)\\
& \quad + \frac{z}{y+z}(1-it)^{-z}  \,_2F_1(1,y;y+z+1;it),
\end{align*}
the second equality following from relations \eqref{2F1_rel1} and 15.3.3 in \cite{AbramStegun}, and therefore
\begin{equation*}
(1-it)^{-1} =\frac{y}{y+z} \,_2F_1(1,y+1;y+z+1;it) + \frac{z}{y+z}(1-it)^{-1}  \,_2F_1(1,y;y+z+1;it).
\end{equation*}
The last equality implies that assertion $\Aa$ holds under assumption \eqref{Model1_AB} with $L \sim \beta(z,y+1)$, $R \sim \beta(y,z+1)$, $p=y/(y+z)$, and $(V_1,V_2) \sim \Gamma_{1} \otimes \Gamma_1$. The following theorem is proved.

\begin{theorem}\label{Stoyanov2000T2_ext}
Under assumption \eqref{Model1_AB}, let $L \sim \beta(z,y+1)$, $R \sim \beta(y,z+1)$, and $p=y/(y+z)$, $y,z >0$. Then $\Aa$--$\Ae$ hold for $a=b=1$.
\end{theorem}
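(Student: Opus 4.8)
The plan is to prove $\Aa$ for $a=b=1$ by the method of characteristic functions and then appeal to Proposition \ref{assertions_prop}. First I would verify the convergence conditions \eqref{converge_c1} and \eqref{converge_c2}: since $L\sim\beta(z,y+1)$ and $R\sim\beta(y,z+1)$ are supported on $(0,1)$, on $\{I_p=1\}$ one has $\abs{A-B}=1-L<1$ and on $\{I_p=0\}$ one has $\abs{A-B}=1-R<1$, so $\pr(\abs{A-B}=1)=0$, while $\pr(A=1)=\pr(I_p=0)=z/(y+z)<1$. With both conditions in hand, Proposition \ref{assertions_prop} reduces the full statement $\Aa$--$\Ae$ to $\Aa$ alone, and since solutions to \eqref{e_equiv} are unique, $\Aa$ for $a=b=1$ is equivalent to the claim that for $(V_1,V_2)\sim\Gamma_1\otimes\Gamma_1$ the random variable $AV_1+BV_2$ has characteristic function $(1-it)^{-1}$.

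To establish this I would compute $\ex e^{it(AV_1+BV_2)}$ directly from \eqref{Model1_AB}. Conditioning on $I_p$ (with $p=y/(y+z)$, $1-p=z/(y+z)$) splits it as $\frac{y}{y+z}\ex e^{it(1-L)V_1}+\frac{z}{y+z}\ex e^{it(V_1+RV_2)}$. Integrating out the exponential $V_1$ in the first term gives $\ex(1-it(1-L))^{-1}$; after the change of variables $w=1-u$ and integration against the $\beta(z,y+1)$ density of $L$, the integral representation of the Gauss hypergeometric function recalled above identifies this term as $\,_2F_1(1,y+1;y+z+1;it)$. In the second term independence gives $\ex e^{it(V_1+RV_2)}=(1-it)^{-1}\ex(1-itR)^{-1}$, and integration against the $\beta(y,z+1)$ density of $R$ yields $(1-it)^{-1}\,_2F_1(1,y;y+z+1;it)$. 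Thus the characteristic function equals $\frac{y}{y+z}\,_2F_1(1,y+1;y+z+1;it)+\frac{z}{y+z}(1-it)^{-1}\,_2F_1(1,y;y+z+1;it)$, which is exactly the right-hand side of the hypergeometric identity displayed immediately before the theorem.

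The remaining step is the identity showing this sum equals $(1-it)^{-1}$, and here I would recycle rather than recompute. From the derivation of Theorem \ref{T4} one already has $\eqref{c4_chf}=(1-it)^{-z}$; applying the Euler transformation 15.3.3 in \cite{AbramStegun} together with the symmetry \eqref{2F1_rel1} rewrites the first hypergeometric term of \eqref{c4_chf} as $(1-it)^{1-z}\,_2F_1(1,y+1;y+z+1;it)$, and multiplying the resulting relation through by $(1-it)^{z-1}$ produces precisely $(1-it)^{-1}=\frac{y}{y+z}\,_2F_1(1,y+1;y+z+1;it)+\frac{z}{y+z}(1-it)^{-1}\,_2F_1(1,y;y+z+1;it)$. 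I expect the only genuine obstacle to be bookkeeping: matching the beta parameters $(z,y+1)$ and $(y,z+1)$ to the hypergeometric integral representation and selecting the correct chain of transformation formulas. Once $\eqref{c4_chf}=(1-it)^{-z}$ is available no new analytic input is required, so the argument is essentially a rescaling of the Theorem \ref{T4} computation, and the conclusion that $V_1\sim\Gamma_1$, i.e.\ $a=b=1$, follows.
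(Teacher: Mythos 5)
Your proposal is correct and follows essentially the same route as the paper: both reduce the theorem to $\Aa$ via Proposition \ref{assertions_prop}, identify the characteristic function of $AV_1+BV_2$ with the two $\,_2F_1$ terms, and obtain the key identity by applying 15.3.3 in \cite{AbramStegun} and the symmetry \eqref{2F1_rel1} to the first term of \eqref{c4_chf} and multiplying the relation $\eqref{c4_chf}=(1-it)^{-z}$ through by $(1-it)^{z-1}$. Your explicit verification of the convergence conditions \eqref{converge_c1}, \eqref{converge_c2} is a minor addition the paper handles implicitly.
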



\begin{remark}\label{perp_rmk}
Note that assertion $\Ad$ under the conditions of Theorem \ref{Stoyanov2000T2_ext} is an extension of Proposition 2 in \cite{Stoyanov2}, where it was proved in the special case $y=z$. In that case, we can derive an interesting distributional relation involving mixtures of gamma distributions from which we will obtain an apparently new explicit form solution to the perpetuity equation as a special case.

Indeed, using the equivalent random equation \eqref{perp_eq} we have from assertion $\Ab$ under conditions of Theorem \ref{Stoyanov2000T2_ext} that
\begin{equation*}
U \ed U(1-C) + I_{1/2}C,
\end{equation*}
for $(U,C,I_{1/2}) \sim U[0,1] \otimes \beta(y,y+1) \otimes \bernoulli(1/2)$. Therefore, for $(V_1,V_2) \sim \Gamma_{y/2} \otimes \Gamma_{y/2+1}$
\begin{align}
U(\widetilde{V}_1+\widetilde{V}_2) &\ed \frac{UV_2 + I_{1/2}V_1}{V_1+V_2}(\widetilde{V}_1+\widetilde{V}_2)\notag\\
&\ed  \frac{UV_2 + I_{1/2}V_1}{V_1+V_2}(V_1+V_2) = UV_2 + I_{1/2}V_1,\label{randgam2}
\end{align}
where $(\widetilde{V}_1,\widetilde{V}_2)$ is an independent copy of $(V_1,V_2)$, and the second equality in distribution follows from Lemma $1$ in \cite{McKinlay}.

Let $\{\gamma(t)\}_{t \ge 0}$, $\gamma(0) = 0$ be a gamma process with independent increments such that $\gamma(t) -\gamma(s) \sim \Gamma_{t-s}$ for $0 \le s < t$. Suppose also that $\{\gamma_1(\cdot)\}$, $\{\gamma_2(\cdot)\}, \ldots$ are independent copies of the process $\{\gamma(t)\}_{t \ge 0}$, $U \sim U[0,1]$, and $J_1, J_2, \ldots$ are i.i.d.\ with distribution $\bernoulli(1/2)$. Assuming independence of all the processes and random variables introduced above, we can rewrite random equation \eqref{randgam2} as
\begin{equation*}
U \gamma(y+1) \ed U \gamma(y/2+1) + J_1 \gamma_1(y/2),
\end{equation*}
and, by repeated substitution, we obtain
\begin{equation*}
U\gamma(y+1) \ed U \gamma(1) + \sum_{k=1}^{\infty} J_k \gamma_k(y/2^{k}),
\end{equation*}
where the series converges a.s.\ by Kolmogorov's two series theorem (see e.g.\ p.386 in \cite{Shiryaev}). By conditioning on $\{J_k\}$ we see that
\begin{equation*}
\sum_{k=1}^{\infty} J_k \gamma_k(y/2^{k}) \ed \gamma_1 \left(  y\sum_{k=1}^{\infty} \frac{J_k}{2^{k}}\right) \ed \gamma_1(yU_1),
\end{equation*}
where $U_1 \sim U[0,1]$ is independent of $(\{\gamma(\cdot)\},\{\gamma_1(\cdot)\},U)$. So we obtained the curious relation
\begin{equation}\label{randgam1}
U\gamma(y+1) \ed U\gamma(1) + \gamma_1(yU_1).
\end{equation}
In particular, for $y=1$, the random variable
\begin{equation*}
U\gamma(1) + \gamma_1(U_1)
\end{equation*}
is exponentially distributed, and therefore $V \sim U[0,1]$ is the unique solution to the perpetuity equation \eqref{perp_re} with $(D,C) = (U, \gamma_1(U_1))$.

One can also show that \eqref{randgam1} holds by computing the Laplace transform (or characteristic function) of each side. A search of the literature yielded only one paper \cite{Jones} that considered the mixture $\gamma_1(yU_1)$. In that paper the explicit form for the density of $\gamma(U)/\gamma(1)$ was given.
\end{remark}

The well-studied original $\langle S_2, R_I | R_I \rangle$ version of CGZ interval splitting model from \cite{Chen1984} is as follows. Choose a uniformly distributed point $U$ in $[0,1]$, and select the larger of the subintervals $[0,U]$, $[U,1]$ with probability $p$, and the smaller with probability $1-p$, and continue this procedure independently on the chosen interval and so on, denoting the limiting location by $Z_p$. In that paper the authors show that $Z_1 \sim \beta(2,2)$, and derive several properties for the distribution of $Z_0$. The related paper \cite{Chen1981} is devoted to proving that $Z_{1/2} \sim \beta(1/2,1/2)$ (note that this result is a special case of the assertion $\Ac$ under the conditions of Theorem \ref{T5} for $(z,p)=(1,1/2)$). These results were obtained independently in the later paper \cite{Devroye}, that also showed that $Z_p$ is beta distributed if and only if $p \in \{1/2,1\}$. More recently, the problem of finding the dual of this scheme motivated the authors of \cite{Bialkowski} to consider three new schemes, neither of these being the dual. 

It is easy to verify that the scheme above is generated by the backward iteration \eqref{BW} with $F_n(x)$ of the following form:
\begin{equation*}
\indicator_{\{U>1/2\}}[I_pUx+(1-I_p)(x+U(1-x))]+\indicator_{\{U \le 1/2\}}[I_p(x+U(1-x))+(1-I_p)Ux],
\end{equation*}
where $(U,I_p) \sim U[0,1] \otimes \bernoulli(p)$. We will now describe the dual process of the above CGZ model, that eluded the authors of \cite{Bialkowski}.

A particle starts from a point $x \in [0,1]$ and moves left or right with equal probabilities. If the particle moves left, it moves either to a uniformly distributed point in $[x/2,x]$ with probability $p$, or to a uniformly distributed point in $[0,x/2]$ with probability $1-p$. Similarly, if the particle moves right, it moves either to a uniformly distributed point in $[x,x+(1-x)/2]$ with probability $p$, or to a uniformly distributed point in $[x+(1-x)/2,1]$ with probability $1-p$. The process continues in this way, each new step being independent of the past.

It is clear that $0$, $1/2$ and $1$ are the only values of $p$ resulting in the particle moving to a point uniformly distributed on a subset of $[0,1]$. The three cases are described below:

\begin{itemize}
\item $p=0$: The particle moves to a uniformly distributed point on the furthest half of the interval on the left or right of $x$ (i.e.\ the intervals $[0,x/2]$ or $[x+(1-x)/2,1]$, respectively).
\item $p=1/2$: The particle moves to a uniformly distributed point on the entire subinterval of $[0,1]$ on its left or right.
\item $p=1$: The particle moves a uniformly distributed point on the closest half of the interval on the left or right of $x$  (i.e.\ to the intervals $[x/2,x]$ or $[x,x+(1-x)/2]$, respectively).
\end{itemize}

Recall that the limit is only beta distributed when $p \in \{1/2,1\}$ (cf.\ \cite{Devroye}), and observe that for $p=1/2$ the process corresponds to the case where $p' = 1/2$ for the following scheme. A particle starts from a fixed $x \in [0,1]$ and moves left (resp.\ right) with probability $p'$ (resp.\ $1-p'$) a distance given by a $\beta(1,z)$ distributed proportion of the length of $[0,x]$ (resp.\ $[x,1]$). The particle continues in same way independently of the past, with limiting distribution $\beta((1-p')z,p'z)$ (cf.\ assertion $\Ad$ under the conditions of Theorem \ref{T5}). It is natural to ask if the limit also beta distributed when the particle moves a distance given by a $\beta(1,z)$ distributed proportion of the length of the closest half subintervals to the left or right. It happens that if $p'=1/2$, then the limit is indeed beta distributed, as we will show in the following theorem.

\begin{theorem}\label{T3}
Under assumption \eqref{Model1_AB}, let $2L \ed 2R \sim \beta(1,z)$, $z>0$, and $p=1/2$. Then $\Aa$--$\Ae$ hold for $a=b=z+1$.
\end{theorem}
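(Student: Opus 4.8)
The plan is to invoke Proposition \ref{assertions_prop}, so that it suffices to establish just $\Aa$ for $a=b=z+1$. First I would check the two convergence hypotheses of that proposition. Since $2L\ed 2R\sim\beta(1,z)$ is supported on $(0,1)$, we have $L,R\in(0,1/2)$ a.s., hence under \eqref{Model1_AB} the quantity $A-B$ equals $1-L$ or $1-R$, both lying in $(1/2,1)$; thus $\pr(\abs{A-B}=1)=0$ and $\pr(A=1)=\pr(I_p=0)=1/2<1$, so \eqref{converge_c1} and \eqref{converge_c2} hold. It then remains to verify relation \eqref{e_equiv}: with $(V_1,V_2)\sim\Gamma_{z+1}\otimes\Gamma_{z+1}$ independent of $(A,B)$, I would show that the characteristic function of $AV_1+BV_2$ equals $(1-it)^{-(z+1)}$, the characteristic function of $\Gamma_{z+1}$, which is exactly what Proposition \ref{p_equiv1} needs.

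Conditioning on $I_p$ (with $p=1/2$) and using independence of $V_1,V_2$, I would write
\[
\ex e^{it(AV_1 + BV_2)} = \tfrac12\, \ex e^{it(1-L)V_1} + \tfrac12\,(1-it)^{-(z+1)}\, \ex e^{itRV_2}.
\]
Writing $L\ed R\ed W/2$ with $W\sim\beta(1,z)$ and integrating out the gamma variables via $\ex e^{isV}=(1-is)^{-(z+1)}$ for $V\sim\Gamma_{z+1}$, each expectation becomes an integral of the form $\int_0^1(1-w\eta)^{-(z+1)}z(1-w)^{z-1}\,dw$, i.e.\ an instance of the Euler integral representation of $\,_2F_1$ recorded in the excerpt, with second and third parameters equal to $1$ and $z+1$. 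Applying the symmetry \eqref{2F1_rel1} together with $\,_2F_1(a,c;c;\eta)=(1-\eta)^{-a}$ (15.1.8 in \cite{AbramStegun}) collapses each one to an elementary factor. I expect the $R$-term to give $\ex e^{itRV_2}=(1-it/2)^{-1}$, and, after rewriting $1-it(1-L)=(1-it)+itL=(1-it)(1-W\eta)$ with $\eta=-\tfrac{it/2}{1-it}$ and using $1-\eta=(1-it/2)/(1-it)$, the $L$-term to give $\ex e^{it(1-L)V_1}=(1-it)^{-z}(1-it/2)^{-1}$.

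Substituting these back yields
\[
\ex e^{it(AV_1 + BV_2)} = \tfrac12\,(1-it/2)^{-1}(1-it)^{-(z+1)}\bigl[(1-it)+1\bigr],
\]
and the proof closes on the identity $(1-it)+1=2-it=2(1-it/2)$, which cancels the factor $(1-it/2)$ and leaves $(1-it)^{-(z+1)}$, as required; this establishes $\Aa$, and hence $\Aa$--$\Ae$ by Proposition \ref{assertions_prop}. I expect the only genuine obstacle to be the bookkeeping in the $L$-term: one must correctly recast $1-it(1-L)$ in the form $(1-it)(1-W\eta)$ so that the Euler/hypergeometric identity applies to the shifted argument, and then simplify $1-\eta$ without sign or algebra errors. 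Everything else is a routine verification paralleling the computation leading to \eqref{T5_eqn}.
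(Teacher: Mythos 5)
Your proposal is correct and follows essentially the same route as the paper: both establish $\Aa$ by conditioning on $I_p$ and computing the characteristic function of $AV_1+BV_2$, arriving at the identical intermediate factors $(1-it)^{-z}(1-it/2)^{-1}$ and $(1-it/2)^{-1}$ and the same final cancellation $(1-it)+1=2(1-it/2)$. The only cosmetic difference is that the paper evaluates the $R$-term via beta--gamma algebra ($MV_2\sim\Gamma_1$ for $M\sim\beta(1,z)$, $V_2\sim\Gamma_{z+1}$) and the $L$-term by a direct integral computation, whereas you route both through the Euler integral, the symmetry \eqref{2F1_rel1}, and $\,_2F_1(a,c;c;\eta)=(1-\eta)^{-a}$; your explicit verification of \eqref{converge_c1}--\eqref{converge_c2} is a small addition that the paper handles by a blanket remark in Section \ref{S_claims}.
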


\begin{proof}
We will only prove $\Aa$. For $M \sim \beta(1,z)$ independent on $(V_1,V_2)$, the characteristic function of the right-hand side of \eqref{e_equiv} with $a=b=z+1$ is
\begin{align}
\ex e^{it(AV_1 + BV_2)} &= \frac{1}{2} \ex e^{it(1-M/2)V_1} + \frac{1}{2} \ex e^{it(V_1+MV_2/2)}\notag\\
&= \frac{1}{2} \int_0^1 \frac{(1-u)^{z-1}z}{(1-(1-u/2)it)^{z+1}} \, du + \frac{1}{2} \ex e^{itV_1} \ex e^{itMV_2/2}.\notag
\end{align}
The last term above is equal to $(1-it)^{-z-1}(1-it/2)^{-1}/2$ due to the well-known property of ``beta-gamma algebra" that for $(X,Y) \sim \beta(a,b) \otimes \Gamma_{a+b}$, one has $XY \sim \Gamma_a$. The integral in the second last term can be computed and is equal to $(1-it)^{-z}(1-it/2)^{-1}$. Therefore, the characteristic function of $AV_1 + BV_2$ is equal to
\begin{align*}
\frac{(1-it)^{-z}}{2(1-it/2)} + \frac{(1-it)^{-z-1}}{2(1-it/2)} = (1-it)^{-z-1}
\end{align*}
which means that equation \eqref{e_equiv} is satisfied.
\end{proof}

\begin{remark}[]
We note that assertion $\Ab$ of Theorem \ref{T3} extends Theorem 4 in \cite{Preater}, which proved it in the special case $z=1$. The solution to the random equation from assertion $\Ab$ in that case was used to model the distribution of the size of voter constituencies in a voter model driven by immigration on the line as discussed in that paper.
\end{remark}

Assertion $\Ac$ under conditions of Theorem \ref{T3} gives the limiting distribution of an $\langle S_2, R_I | R_I \rangle$ version of the CGZ model where the splitting point has density
\begin{equation}\label{tstsp_density}
f(u) = \left\{ \begin{array}{ll} 
z(1-2u)^{z-1}, & 0 < u < 1/2,\\ 
z(2u-1)^{z-1}, & 1/2 \le u < 1/2,\\
0, & \text{elsewhere},\\
\end{array}\right.\\
\end{equation}
and the larger of the intervals is chosen at each step. In particular, the following corollary holds true.

\begin{corollary}\label{tstsp_cor}
Starting with the unit interval, choose a random splitting point $S_1$ with density \eqref{tstsp_density} and select the larger of the intervals $[0,S_1], [S_1, 1]$. Continue independently in this way on the chosen subinterval. Then the limiting location of the scheme has distribution $\beta(z+1, z+1)$.
\end{corollary}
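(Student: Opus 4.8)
The plan is to recognize the interval-splitting scheme in the statement as precisely the nested interval scheme (1A) of Model 1 run with the parameter values of Theorem \ref{T3}, and then to read off the limiting distribution from assertion $\Ac$ of that theorem. Since Theorem \ref{T3} asserts $\Ac$ with $(a,b)=(z+1,z+1)$, once the two schemes are identified the conclusion $Y \sim \beta(z+1,z+1)$ is immediate.

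To set up the identification, recall that scheme (1A) with $p=1/2$ and $L \ed R$ equal in distribution to $M/2$, where $M \sim \beta(1,z)$ (this is the content of the hypothesis $2L \ed 2R \sim \beta(1,z)$ of Theorem \ref{T3}), selects at each step either the left-anchored interval $[0,1-L]$ with probability $1/2$ or the right-anchored interval $[R,1]$ with probability $1/2$, each of length $1-M/2 > 1/2$. In the splitting-point description I would first note that the density $f$ in \eqref{tstsp_density} is symmetric about $1/2$ and integrates to $1/2$ on each of $(0,1/2)$ and $(1/2,1)$; hence selecting the larger subinterval makes the chosen interval left-anchored (the event $\{S_1>1/2\}$) and right-anchored (the event $\{S_1<1/2\}$) each with probability $1/2$, which reproduces $p=1/2$.

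The one genuine computation, and the step I expect to require the most care, is the change of variables showing that the selected interval has the right law. On the branch $\{S_1>1/2\}$ the chosen interval $[0,S_1]$ matches $[0,1-L]$ under $L=1-S_1$, i.e.\ $M:=2L=2(1-S_1)$; substituting into the conditional density $2f(u)=2z(2u-1)^{z-1}$ on $(1/2,1)$, together with $2S_1-1=1-M$ and $|dS_1|=\tfrac12\,dM$, yields the density $z(1-M)^{z-1}$ of $\beta(1,z)$ for $M$. The branch $\{S_1<1/2\}$, where $[S_1,1]$ matches $[R,1]$ under $M:=2R=2S_1$, is symmetric and gives the same law. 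Hence selecting the larger subinterval with splitting density \eqref{tstsp_density} is distributionally identical, step by step, to scheme (1A) with $p=1/2$ and $2L \ed 2R \sim \beta(1,z)$.

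Finally, because Model 1 has $A>B$ a.s., both schemes enjoy the independence property $\M$: at every stage the same rule, rescaled to the chosen subinterval, is applied independently of the past. Thus the two schemes agree as nested interval processes, not merely at the first step, so the limiting location of the corollary's scheme coincides with the almost sure limit $Y$ of the backward iteration in Theorem \ref{T3}. Assertion $\Ac$ then gives $Y \sim \beta(z+1,z+1)$, completing the proof.
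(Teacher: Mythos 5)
Your proposal is correct and follows essentially the same route as the paper: the corollary is stated there as an immediate consequence of assertion $\Ac$ under the conditions of Theorem \ref{T3}, after recognising the splitting scheme with density \eqref{tstsp_density} (selecting the larger subinterval) as exactly the scheme (1A) with $p=1/2$ and $2L \ed 2R \sim \beta(1,z)$. Your explicit change of variables on the two branches $\{S_1>1/2\}$, $\{S_1<1/2\}$ just fills in the identification that the paper leaves to the reader.
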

Note that Corollary \ref{tstsp_cor} extends the results in \cite{Chen1984} and \cite{Devroye}, where that claim was proved for $z=1$.


\subsection{Model 2}\label{model2}

Now consider a more general model where $\mu$ can have support on an arbitrary subset of the whole unit square $[0,1]^2$. In this more general model, the nested interval scheme, Markov chain $\{X_n\}$ on $[0,1]$ given by \eqref{FW_rde}, and Markov chain $\{X_n'\}$ on $\reals_+$ given by \eqref{FWG_rde} are described respectively in (2A), (2B), and (2C) below, with limiting distributions given by assertions $\Ac$--$\Ae$, respectively.

\begin{itemize}
\item[(2A)] Starting with the unit interval $\ii_0 = [0,1]$, the interval $\ii_1$ is given by $[\min(A_1,B_1),$ $\max(A_1,B_1)]$ with $(A_1,B_1) \sim \mu$ (cf.\ Section \ref{S_general_interval}). For $n \ge 1$, the $(n+1)$st interval is chosen from $\ii_n$ using the distribution
\begin{equation*}
\mu_{n+1} = \biggr\{ \begin{array}{ll} 
\mu_n & \text{if } A_n>B_n,\\ 
\mu_n' & \text{otherwise,}
\end{array}\\
\end{equation*}
shifted and scaled to $\ii_n^2$ (see $\Mb$), with $\mu_0 := \mu$. Recall that if either of the conditions $\Ma$--$\Mc$ on $\mu$ is satisfied, then the independence property $\M$ holds true.

\item[(2B)] A particle starts from a point $x \in [0,1]$ and moves to the point $A_1 x + B_1(1-x)$. The process continues in this way, each new step being independent of the past.

\item[(2C)] A particle starts from a point $x' \in \reals_+$ and moves to the point $A_1 x' + B_1 V_1$ for $V_1 \sim \Gamma_a$, for some $a>0$. The process continues in this way, each new step being independent of the past.
\end{itemize}

First we will consider a special case of Model $2$ discussed in the pioneering work of DeGroot and Rao \cite{DeGroot}. The result of their Example $6$ coincides with assertion $\Ad$ in the following theorem.


\begin{theorem}\label{t_gb}
If $(A,B) \sim \beta(w,y) \otimes \beta(y,z)$ for some $w,y,z>0$, then $\Aa$--$\Ae$ hold for $(a,b)=(w+y,y+z)$.
\end{theorem}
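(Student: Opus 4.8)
The plan is to invoke Proposition \ref{assertions_prop}, so that it suffices to verify the convergence conditions \eqref{converge_c1}, \eqref{converge_c2} together with a single one of $\Aa$ or $\Ab$. Since $A \sim \beta(w,y)$ and $B \sim \beta(y,z)$ are absolutely continuous, we have $\pr(A = 1) = 0$ and $\pr(\abs{A - B} = 1) = 0$, so both convergence conditions hold trivially and uniqueness of the solutions is automatic. Thus the whole burden reduces to exhibiting a solution to \eqref{e_equiv}, and I would prove $\Aa$ directly.

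To prove $\Aa$ I would avoid the hypergeometric machinery used in the proofs of Theorems \ref{T4} and \ref{Stoyanov2000T2_ext} and instead read off the answer from beta-gamma algebra. The crucial observation is that the two product terms on the right-hand side of \eqref{e_equiv} are each gamma distributed once the parameters are matched. Taking $(V_1, V_2) \sim \Gamma_{w+y} \otimes \Gamma_{y+z}$ independent of $(A,B)$, I would apply the identity that $XY \sim \Gamma_{\alpha}$ whenever $(X,Y) \sim \beta(\alpha, \beta) \otimes \Gamma_{\alpha+\beta}$ (recalled in the proof of Theorem \ref{T3}). With $A \sim \beta(w,y)$ and $V_1 \sim \Gamma_{w+y}$ this gives $AV_1 \sim \Gamma_w$, and with $B \sim \beta(y,z)$ and $V_2 \sim \Gamma_{y+z}$ it gives $BV_2 \sim \Gamma_y$.

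The final step is to combine the two pieces. Because $(A, V_1)$ and $(B, V_2)$ are mutually independent, the summands $AV_1$ and $BV_2$ are independent, and the additivity of the gamma family yields $AV_1 + BV_2 \sim \Gamma_{w+y} = \Gamma_a$. This is precisely the law of $V_1$, so \eqref{e_equiv} holds and $\Aa$ is established with $(a,b) = (w+y, y+z)$; Proposition \ref{assertions_prop} then delivers all of $\Aa$--$\Ae$.

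I do not anticipate a genuine obstacle here: the content is simply the recognition that the parameters are arranged so that each beta factor ``absorbs'' its gamma partner via beta-gamma algebra. If one preferred a self-contained characteristic-function argument in the style of the earlier theorems, the same conclusion follows by evaluating $\ex e^{itAV_1} = \,_2F_1(w+y, w; w+y; it) = (1-it)^{-w}$ from the integral representation of $\,_2F_1$ and the reduction $\,_2F_1(\alpha, \beta; \alpha; \eta) = (1-\eta)^{-\beta}$ (15.1.8 in \cite{AbramStegun}), with the analogous identity $\ex e^{itBV_2} = (1-it)^{-y}$, whose product is $(1-it)^{-(w+y)}$; but the beta-gamma route makes the structural reason transparent and keeps the proof to a few lines.
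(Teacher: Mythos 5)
Your proof is correct and follows essentially the same route as the paper: the paper also reduces everything to $\Aa$ and observes, via the Lukacs characterisation (equivalently, beta--gamma algebra), that $AV_1 \sim \Gamma_w$ and $BV_2 \sim \Gamma_y$, so that by independence $AV_1 + BV_2 \sim \Gamma_{w+y} \ed V_1$. The only cosmetic difference is that the paper derives the key identity from the independence of $V_1+V_2$ and $V_1/(V_1+V_2)$, whereas you cite the beta--gamma product fact directly (and you make the appeal to Proposition \ref{assertions_prop} and the convergence conditions explicit, which the paper leaves implicit).
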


\begin{proof}[Proof of Theorem \ref{t_gb}]
We will only prove $\Aa$. Recall the well-known characteristic property of the gamma distribution (see \cite{Lukacs}): for $(V_1,V_2) \sim \Gamma_a \otimes \Gamma_{b}$, the sum $V_1+V_2$ is independent of $V_1/(V_1+V_2)$. It follows that 
\begin{equation}\label{e_gambeta}
\frac{V_1}{V_1+V_2}(\widetilde{V}_1+\widetilde{V}_2) \ed V_1,
\end{equation}
where $(\widetilde{V}_1,\widetilde{V}_2)$ is an independent copy of $(V_1,V_2)$.

By \eqref{e_gambeta} we have that $AV_1 \sim \Gamma_w$ and $BV_2 \sim \Gamma_y$, and so by independence, the sum $AV_1 + BV_2$ has the same distribution as $V_1 \sim \Gamma_{w+y}$, as required.
\end{proof}

In particular, if $w=z$, then $\Mc$ holds, and so assertion $\Ac$ under the conditions of Theorem \ref{t_gb} gives the limiting location of the $\langle G_1, R_I | D_I \rangle$ nested interval scheme described in the following corollary.

\begin{corollary}\label{Bxy_cor}
Starting with the unit interval, choose two independent points $A_1 \sim  \beta(z,y)$ and $B_1 \sim \beta(y,z)$ in $[0,1]$, and select the interval with these end points. Continue independently in this way on the chosen subinterval. Then the limiting location of the scheme has distribution $\beta(y+z,y+z)$.
\end{corollary}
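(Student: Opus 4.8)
The plan is to obtain the corollary directly from Theorem \ref{t_gb} by specialising to the symmetric case $w = z$ and identifying the scheme in the statement with the nested interval process generated by the backward iteration \eqref{BW}. First I would put $w = z$ in Theorem \ref{t_gb}, so that $(A,B) \sim \beta(z,y) \otimes \beta(y,z)$ and assertion $\Ac$ holds for $(a,b) = (y+z,y+z)$; that is, $Y_n(x) \aas Y \sim \beta(y+z,y+z)$ uniformly in $x$.

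The one genuine verification is that the scheme described in the corollary coincides with this backward process, and this reduces to checking condition $\Mc$. Using the standard fact that $1-X \sim \beta(\beta,\alpha)$ when $X \sim \beta(\alpha,\beta)$, one finds $(1-A,1-B) \sim \beta(y,z) \otimes \beta(z,y)$, while $(B,A) \sim \beta(y,z) \otimes \beta(z,y)$. Hence $\mu' = \law((1-A,1-B)) = \law((B,A))$, so $\Mc$ holds, and by the discussion in Section \ref{S_general_interval} the independence property $\M$ follows. This means the conditional law of $\ii_n$ inside $\ii_{n-1}$, after translating and rescaling to the unit interval, is the same for every $n$ and coincides with the law of $\ii_1$ inside $[0,1]$.

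It then remains to observe that $\ii_1$ is exactly the interval with endpoints $A_1 \sim \beta(z,y)$ and $B_1 \sim \beta(y,z)$ (recall from Section \ref{S_general_interval} that $F_1$ maps $[0,1]$ onto $[\min(A_1,B_1),\max(A_1,B_1)]$), and that by $\M$ each subsequent interval is obtained by applying the same rule to the current subinterval. This is precisely the recursion in the corollary's statement, so the limiting location of the scheme equals $Y$ and is therefore $\beta(y+z,y+z)$ distributed by assertion $\Ac$.

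The only point requiring care is the self-similarity: the backward iteration alternates between the laws $\mu$ and $\mu'$ according to the orientation of the previous interval (the sign of $\prod_{j}(A_j-B_j)$), so without $\Mc$ the rescaled increments would not be identically distributed across steps. The substance of the corollary is that in the symmetric case $w=z$ these two laws agree as unordered pairs of endpoints, collapsing the orientation-dependent recursion into the single self-similar rule stated; everything else is a direct appeal to Theorem \ref{t_gb}.
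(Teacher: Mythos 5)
Your proposal is correct and follows essentially the same route as the paper: specialise Theorem~\ref{t_gb} to $w=z$, observe that $\Mc$ (hence the i.i.d.\ property $\M$) holds for $(A,B) \sim \beta(z,y) \otimes \beta(y,z)$, and then read off the limit law $\beta(y+z,y+z)$ from assertion $\Ac$. The paper states this verification more tersely, but your explicit check of $\Mc$ and of the identification of the scheme with the backward iteration is exactly the argument it intends.
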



\begin{corollary}\label{c_gb}
If $A \sim \beta(w,y)$ and $B = 1$, then $\Aa$--$\Ae$ hold for $(a,b)=(w+y,y)$.
\end{corollary}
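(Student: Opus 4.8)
The plan is to recognise this as the degenerate $z \downarrow 0$ boundary case of Theorem \ref{t_gb}, in which the factor $B \sim \beta(y,z)$ collapses to the point mass at $1$ (the $\beta(y,z)$ density $u^{y-1}(1-u)^{z-1}/B(y,z)$ concentrates at $u=1$ as $z \downarrow 0$), and the target parameters $(w+y,\,y+z)$ converge to $(w+y,\,y)$. Rather than justify a limiting argument, I would instead give a direct proof that mirrors the proof of Theorem \ref{t_gb}, since the degeneracy of $B$ only simplifies the computation.

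First I would invoke Proposition \ref{assertions_prop}: it suffices to verify the convergence conditions \eqref{converge_c1}, \eqref{converge_c2} and then establish assertion $\Aa$ alone. The convergence conditions are immediate because $A \sim \beta(w,y)$ is absolutely continuous. Indeed, since $B=1$ we have $\abs{A-B} = 1-A$, so $\pr(\abs{A-B}=1) = \pr(A=0) = 0 < 1$, giving \eqref{converge_c1}, and likewise $\pr(A=1) = 0 < 1$, giving \eqref{converge_c2}.

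To prove $\Aa$ I would take $(V_1,V_2) \sim \Gamma_{w+y} \otimes \Gamma_y$ independent of $(A,B)$ and verify \eqref{e_equiv}, that is, $V_1 \ed AV_1 + BV_2$. By the beta--gamma algebra recalled in the proof of Theorem \ref{t_gb} (for $(X,Y) \sim \beta(a,b) \otimes \Gamma_{a+b}$ one has $XY \sim \Gamma_a$), the product $AV_1$ is $\Gamma_w$ distributed, while $BV_2 = V_2 \sim \Gamma_y$ trivially since $B=1$. As $AV_1$ and $BV_2$ are independent, their sum is $\Gamma_{w+y} = \Gamma_a$ distributed, which is the law of $V_1$; this is exactly \eqref{e_equiv}, so $\Aa$ holds with $(a,b) = (w+y,\,y)$.

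There is no genuine obstacle here: the only point requiring a moment's thought is confirming that the convergence conditions survive the degeneration of $B$ to a constant, and this holds precisely because $A$ remains continuous and equals neither $0$ nor $1$ with positive probability. Once $\Aa$ is in hand, Proposition \ref{assertions_prop} delivers $\Aa$--$\Ae$.
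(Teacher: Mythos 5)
Your proof is correct and is essentially the paper's own argument: the paper proves the corollary by setting $B=1$ in the proof of Theorem \ref{t_gb}, which is exactly what you do --- $AV_1 \sim \Gamma_w$ by beta--gamma algebra, $BV_2 = V_2 \sim \Gamma_y$, and the independent sum is $\Gamma_{w+y}$, the law of $V_1$. Your explicit verification of the convergence conditions \eqref{converge_c1}, \eqref{converge_c2} is a small point of extra care that the paper leaves implicit, but it changes nothing in substance.
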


\begin{proof}
To prove $\Aa$, set $B=1$ in the proof of Theorem \ref{t_gb}.
\end{proof}

\begin{remark}[]
The interval-valued process (2A) generated by $\mu$ from Theorem \ref{c_gb} is the following $\langle S_2, R_G | D_G \rangle$ interval splitting model that apparently has not appeared previously in the literature. The random rule represented by $R_G$ prescribes to choose the splitting point $S_n$ with the shifted and scaled distribution $\beta(w,y)$ (resp.\ $\beta(y,w)$) on $\ii_{n-1}$ if $n$ is ood (resp.\ even). The deterministic rule represented by $D_G$ prescribes to select the right (resp.\ left) interval if $n$ is odd (resp.\ even). Then, by assertion $\Ac$ under the conditions of Theorem \ref{c_gb}, the limiting location of this scheme has law $\beta(w+y,y)$.
\end{remark}

\begin{remark}[]
Now consider the process $\bar{X} := \{\bar{X}_n\}_{n \ge 0}$ defined in terms of the forward iteration \eqref{FW_rde} as follows. If $n$ is even set $\bar{X}_n := X_n$, otherwise set $\bar{X}_n := 1-X_n$. Then $\bar{X}$ is described as follows. A particle starts from a fixed $\bar{X}_0 = x \in [0,1]$, and moves in alternating directions at each step. The particle first moves left to $\bar{X}_1$ with the shifted and scaled distribution $\beta(y,w)$ on $[0,\bar{X}_0]$. In the next step, the particle moves right to $\bar{X}_2$ with the shifted and scaled distribution $\beta(w,y)$ on $[\bar{X}_0,1]$, and then continues this cycle independently of the past. We have from assertion $\Ad$ under the conditions of Theorem \ref{c_gb} that the distribution of $\bar{X}_{2n}$ converges weakly to $\beta(w+y,y)$ and that of $\bar{X}_{2n+1}$ converges weakly to $\beta(w,w+y)$ as $n \rightarrow \infty$. Although this result extends Theorem 1 in \cite{Stoyanov} where the special case when $w=y=1$ was first proved, it is not a new one. It is a special case of a more general model considered on p.2 in \cite{Letac2}.
\end{remark}

It turns out we can extend Corollary \ref{c_gb} when $w=y=1$ as we will show in our next theorem.


\begin{theorem}\label{t_gb3}
If $(A,B) \sim U[p,1] \otimes \bernoulli(1-p)$, $p \in (0,1)$, then $\Aa$--$\Ae$ hold for $(a,b)=(2,1)$.
\end{theorem}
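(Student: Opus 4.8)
The plan is to invoke Proposition \ref{assertions_prop}, so that it suffices to verify the two convergence conditions \eqref{converge_c1}, \eqref{converge_c2} and then establish just $\Aa$ by the method of characteristic functions, exactly in the spirit of the proofs of Theorems \ref{t_rosenblatt_ext} and \ref{T3}. First I would dispatch the convergence conditions. Since $A \sim U[p,1]$ is continuous, $\pr(A=1)=0$, which gives \eqref{converge_c2}. For \eqref{converge_c1}, note that $\abs{A-B}=1$ forces either $A=1$ (in the event $B=0$) or $A=0$ (in the event $B=1$); the former has probability $0$ and the latter is impossible since $A \ge p > 0$, so $\pr(\abs{A-B}=1)=0 < 1$.

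Next I would take $V_1 \sim \Gamma_2$ and $V_2 \sim \Gamma_1$, independent of $(A,B) \sim U[p,1] \otimes \bernoulli(1-p)$, and compute $\ex e^{it(AV_1+BV_2)}$. The natural first move is to condition on $B$, which is $0$ with probability $p$ and $1$ with probability $1-p$; using independence of $V_2$ (whose $\Gamma_1$ characteristic function is $(1-it)^{-1}$) this gives
\[
\ex e^{it(AV_1+BV_2)} = \ex e^{itAV_1}\Bigl[p+(1-p)(1-it)^{-1}\Bigr] = \ex e^{itAV_1}\cdot\frac{1-itp}{1-it}.
\]
The remaining factor $\ex e^{itAV_1}$ is obtained by conditioning on $A=u$, which contributes the $\Gamma_2$ characteristic function $(1-itu)^{-2}$, and integrating against the $U[p,1]$ density. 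Using the elementary antiderivative $\int (1-itu)^{-2}\,du = \bigl(it(1-itu)\bigr)^{-1}$ one finds $\ex e^{itAV_1} = \bigl((1-it)(1-itp)\bigr)^{-1}$.

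Combining the two pieces produces the telescoping cancellation
\[
\ex e^{it(AV_1+BV_2)} = \frac{1}{(1-it)(1-itp)}\cdot\frac{1-itp}{1-it} = (1-it)^{-2},
\]
which is precisely the characteristic function of $V_1 \sim \Gamma_2$, so $\Aa$ holds and the full conclusion $\Aa$--$\Ae$ follows from Proposition \ref{assertions_prop}. There is no genuine obstacle here: the only step requiring any work is evaluating the single integral of $(1-itu)^{-2}$ over $[p,1]$, which is elementary, and the factor $1-itp$ arising from that integral is exactly the factor that cancels the one coming from conditioning on the Bernoulli variable $B$, making the final simplification immediate.
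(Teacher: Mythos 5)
Your proposal is correct and follows essentially the same route as the paper: verify $\Aa$ by computing the two characteristic function factors $\ex e^{itAV_1} = \bigl((1-it)(1-itp)\bigr)^{-1}$ and $\ex e^{itBV_2} = (1-itp)/(1-it)$, multiply by independence, and observe the cancellation yielding $(1-it)^{-2}$. The only difference is that you explicitly check the convergence conditions \eqref{converge_c1}, \eqref{converge_c2} before invoking Proposition \ref{assertions_prop}, which the paper handles implicitly in the preamble to its special cases; this is a harmless (indeed welcome) addition.
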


\begin{proof}
We will only prove $\Aa$. The characteristic function for the first term of the right hand side of \eqref{e_equiv} is given by
\begin{equation*}
\ex e^{itAV_1} = \frac{1}{1-p}\int_p^1 \frac{du}{(1-itu)^{2}} = \frac{1}{(1-it)(1-itp)}.
\end{equation*}
Similarly, for the second term we have that
\begin{equation}\label{gb3_e3}
\ex e^{it BV_2} = p + \frac{1-p}{1-it} = \frac{1-itp}{1-it}.
\end{equation}
Therefore, by independence, the characteristic function of the right hand side of \eqref{e_equiv} is $(1-it)^{-2}$. Assertion $\Aa$ holds and the theorem is proved.
\end{proof}

Our next result is a simple consequence of the following lemma.


\begin{lemma}\label{l_relation1}
Suppose $(A, B, V_1, V_2) \sim U[0,y/(w+y)] \otimes \beta(w,y) \otimes \Gamma_2 \otimes \Gamma_{w+y+1}$ for $w,y>0$. Then
\begin{equation}\label{gb4_e1}
AV_1 + BV_2 \sim \Gamma_{w+1}.
\end{equation}
\end{lemma}

\begin{proof}
The characteristic function for the first term of the left hand side of \eqref{gb4_e1} is given by
\begin{equation*}
\ex e^{it AV_1} = \frac{w+y}{y}\int_0^{y/(w+y)} \frac{du}{(1-itu)^2} = \left(1 -\frac{ity}{w+y} \right)^{-1}.
\end{equation*}
The characteristic function of the term $BV_2$ is
\begin{equation*}
\ex e^{it BV_2} = B(w,y)^{-1}\int_0^1 \frac{u^{w-1} (1-u)^{y-1}}{(1-itu)^{w+y+1}}du = (1-it)^{-w-1} \left(1-\frac{ity}{w+y} \right),
\end{equation*}
and therefore by independence, we obtain that the characteristic function of the left hand side of \eqref{gb4_e1} is $(1-it)^{-w-1}$ as required.
\end{proof}

\begin{corollary}\label{gb4}
If $(A,B) \sim U[0,y/(y+1)] \otimes \beta(1,y)$, then $\Aa$--$\Ae$ hold for $(a,b)=(2,y+2)$.
\end{corollary}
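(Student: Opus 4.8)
The plan is to read Corollary \ref{gb4} off Lemma \ref{l_relation1} by fixing its free parameter, and then to invoke Proposition \ref{assertions_prop} to pass from assertion $\Aa$ to the full list $\Aa$--$\Ae$.

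First I would specialise Lemma \ref{l_relation1} to $w = 1$. With this choice the uniform interval $[0,y/(w+y)]$ becomes $[0,y/(y+1)]$ and the law $\beta(w,y)$ becomes $\beta(1,y)$, so the pair $(A,B)$ in the lemma is distributed exactly as in the corollary. The accompanying gamma variables are then $(V_1,V_2) \sim \Gamma_2 \otimes \Gamma_{y+2}$, and the lemma's conclusion \eqref{gb4_e1} reads $AV_1 + BV_2 \sim \Gamma_{w+1} = \Gamma_2$.

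Next I would observe that this is precisely assertion $\Aa$ for $(a,b) = (2,y+2)$. Indeed, taking $V_2 \sim \Gamma_{y+2} = \Gamma_b$ and $V_1 \sim \Gamma_2 = \Gamma_a$ with $V_1,V_2,(A,B)$ mutually independent, the identity $AV_1 + BV_2 \sim \Gamma_2$ combined with $V_1 \sim \Gamma_2$ gives $V_1 \ed AV_1 + BV_2$, which is exactly \eqref{e_equiv}; uniqueness of the solution is granted by the convergence conditions as remarked before Model 1.

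Finally I would verify those conditions so that Proposition \ref{assertions_prop} applies. Since $A \le y/(y+1) < 1$ almost surely, one has $\pr(A=1) = 0$, giving \eqref{converge_c2}. The event $\{\abs{A-B}=1\}$ is negligible too: $A-B=1$ would force $A=1$, which cannot happen, while $B-A=1$ would force $A=0$, an event of probability zero because $A$ has a continuous distribution; hence \eqref{converge_c1} holds. Proposition \ref{assertions_prop} then delivers all of $\Aa$--$\Ae$. The step carrying the analytic content is Lemma \ref{l_relation1} itself, which is already proved; here the only point requiring care is the parameter bookkeeping—checking that $w=1$ yields the pairing $(a,b)=(2,y+2)$ and that $V_1$ and $V_2$ are assigned the roles demanded by the definition of $\Aa$—so no genuine obstacle remains.
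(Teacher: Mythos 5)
Your proposal is correct and matches the paper's proof, which likewise obtains $\Aa$ by setting $w=1$ in Lemma \ref{l_relation1} and then appeals to Proposition \ref{assertions_prop}. Your explicit verification of the convergence conditions \eqref{converge_c1} and \eqref{converge_c2} is a detail the paper leaves implicit, but it is the same argument.
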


\begin{proof}
Assertion $\Aa$ follows by setting $w=1$ in Lemma \ref{l_relation1}.
\end{proof}


\begin{theorem}\label{t_s2_4}
If $(A,B) = (A',A'B')$ for $(A',B') \sim \beta(w+y,y) \otimes \beta(y,w)$, then $\Aa$--$\Ae$ hold for $a=b=w+y$.
\end{theorem}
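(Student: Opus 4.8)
The plan is to invoke Proposition \ref{assertions_prop} and reduce everything to assertion $\Aa$. The convergence conditions are immediate here: since $A = A' \sim \beta(w+y,y)$ is absolutely continuous on $(0,1)$ we have $\pr(A=1)=0$, so \eqref{converge_c2} holds, and $\abs{A-B} = A'(1-B') < 1$ a.s., so \eqref{converge_c1} holds. Thus it suffices to verify $\Aa$, i.e.\ that for $(V_1,V_2) \sim \Gamma_{w+y} \otimes \Gamma_{w+y}$ taken independent of $(A,B)$ one has $AV_1 + BV_2 \sim \Gamma_{w+y}$, which is exactly \eqref{e_equiv} with $a=b=w+y$.

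The key idea is to exploit the product structure $(A,B)=(A',A'B')$, which lets me write
\[
AV_1 + BV_2 = A'V_1 + A'B'V_2 = A'\bigl(V_1 + B'V_2\bigr),
\]
and then to telescope away the two beta factors using the beta--gamma algebra identity already employed in the proof of Theorem \ref{t_gb}: if $X \sim \beta(s,t)$ is independent of $Y \sim \Gamma_{s+t}$, then $XY \sim \Gamma_s$. Concretely I would first note that $B' \sim \beta(y,w)$ is independent of $V_2 \sim \Gamma_{y+w}$, so $B'V_2 \sim \Gamma_y$; next, since $V_1 \sim \Gamma_{w+y}$ is independent of $B'V_2 \sim \Gamma_y$ (both independences descending from the mutual independence of $A',B',V_1,V_2$), additivity of independent gammas gives $V_1 + B'V_2 \sim \Gamma_{w+2y}$; finally $A' \sim \beta(w+y,y)$ is independent of $V_1 + B'V_2 \sim \Gamma_{(w+y)+y}$, so a second application of the identity yields $A'(V_1 + B'V_2) \sim \Gamma_{w+y}$. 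Hence $AV_1 + BV_2 \sim \Gamma_{w+y}$ and $\Aa$ holds, whence $\Aa$--$\Ae$ follow by Proposition \ref{assertions_prop}.

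The computation is short, so there is no serious obstacle; the only points requiring care are the independence bookkeeping at each stage and the matching of shape parameters that makes the two beta--gamma cancellations legitimate — the gamma shape must equal the sum of the beta parameters each time, namely $y+w$ in the first step and $(w+y)+y = w+2y$ in the last. In effect the result is a twofold telescoping of the gamma--beta identity made possible by the special multiplicative form of $(A,B)$, and this structural observation is really all that the proof needs.
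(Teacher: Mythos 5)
Your proposal is correct, and it takes a genuinely different route from the paper. The paper proves $\Ab$ rather than $\Aa$: it represents $(A',B') \ed \bigl((V_1+V_2)/(V_1+V_2+V_3),\, V_1/(V_1+V_2)\bigr)$ for $(V_1,V_2,V_3) \sim \Gamma_y \otimes \Gamma_w \otimes \Gamma_y$, reduces the random equation \eqref{FW_re} to $X(V_1+V_2+V_3) \ed XV_2 + V_1$ via Lemma 1 of \cite{McKinlay}, and then settles that relation by citing Theorem 2(B) of \cite{Dufresne1998}. You instead verify $\Aa$ directly: factoring $AV_1+BV_2 = A'(V_1+B'V_2)$ and telescoping two applications of the beta--gamma identity ($B'V_2 \sim \Gamma_y$, then $V_1+B'V_2 \sim \Gamma_{w+2y}$, then $A'(V_1+B'V_2) \sim \Gamma_{w+y}$), with the shape parameters matching exactly at each cancellation. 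Your independence bookkeeping is sound (mutual independence of $A',B',V_1,V_2$ gives each step), and you also check the convergence conditions \eqref{converge_c1}, \eqref{converge_c2} explicitly, which the paper leaves implicit. What each approach buys: yours is self-contained and more elementary, using only the beta--gamma algebra fact already invoked elsewhere in the paper (proof of Theorem \ref{T3}) plus gamma additivity, with no appeal to an external structural theorem; the paper's route, while requiring Dufresne's identity, exposes the Dirichlet-type representation of $(A',B')$ that underlies the remark following the theorem (the uniform order statistics interpretation extending Kennedy's scheme), a structure your factorization keeps hidden.
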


\begin{proof}
We will only prove $\Ab$. For $(V_1,V_2,V_3) \sim \Gamma_y \otimes \Gamma_w \otimes \Gamma_y$, one has that
\begin{equation*}
(A',B') \ed \left( \frac{V_1+V_2}{V_1+V_2+V_3}, \frac{V_1}{V_1+V_2} \right).
\end{equation*}
Therefore it suffices to show that
\begin{equation*}
X \ed \frac{V_2 X + V_1}{V_1+V_2+V_3}, 
\end{equation*}
which by Lemma 1 in \cite{McKinlay} is equivalent to showing that
\begin{equation*}
X(V_1+V_2+V_3) \ed XV_2 + V_1.
\end{equation*}
The last relation follows from Theorem $2$ (B) in \cite{Dufresne1998}, as required.
\end{proof}

\begin{remark}[]
Clearly $A > B$ a.s.\ under the conditions of Theorem \ref{t_s2_4}, and so setting
\[
(C,D) := (1-A+B,B/(1-A+B)),
\]
it follows that $X \sim \beta(w+y,w+y)$ satisfies perpetuity equation \eqref{perp_eq} for $(C,D) \sim \beta(2y,w) \otimes \beta(y,y)$.
\end{remark}

\begin{remark}[]
Note that assertion $\Ac$ under the conditions of Theorem \ref{t_s2_4} gives the limiting distribution of the following $\langle G_1, R_I | D_I \rangle$ nested interval scheme that has apparently not appeared previously in the literature. The random rule represented by $R_I$ prescribes to choose  the interval with right end point $A'_1 \sim \beta(w+y,y)$, and left end point with distribution $\beta(y,w)$ shifted and scaled to $[0, A'_1]$. The deterministic rule represented by $D_I$ prescribes to select $\ii_1$ as the interval generated from the first stage. We continue independently in the same way on $\ii_1$ ad infinitum.

In the special case where $w$ and $y$ are integers, the above nested interval scheme corresponds to choosing an interval with end points taken from the uniform order statistics $U_{(1)}, U_{(2)}, \ldots, U_{(d)}$ as follows. For $k=1, 2, \ldots, \lfloor d/2 \rfloor$, if one chooses $\ii_1 = [U_{(k)},U_{(d-k+1)}]$ and continues in the same way on $\ii_1$ independently of the past, then the limiting location of this scheme has distribution $\beta(d-k+1,d-k+1)$. This is an extension of Kennedy's scheme in \cite{Kennedy} where the above claim was proved for $k=1$.
\end{remark}


\vspace{0.3cm}
\noindent \textbf{Acknowledgements.} This research was supported by ARC Discovery Project DP120102398, the ARC Centre of Excellence for Mathematics and Statistics of Complex Systems (MASCOS) and the Maurice Belz Trust. The author is grateful for numerous fruitful discussions with K.\ Borovkov, whose suggestions helped in many ways to improve the paper, and to the School of Mathematical Sciences at Queen Mary, University of London, for providing a visiting position while this research was undertaken.
 

\end{document}